\documentclass[12pt,a4paper]{article}
\usepackage[margin=2.6cm]{geometry}
\usepackage[T1]{fontenc}
\usepackage[utf8]{inputenc}
\usepackage[english]{babel}
\usepackage{microtype}
\usepackage[dvipsnames]{xcolor}
\usepackage{float}
\usepackage{graphicx}
\usepackage{caption}
\usepackage{subcaption}
\usepackage{soul}
\usepackage[shortlabels]{enumitem}

\usepackage{ upgreek }

\usepackage{xcolor}

\usepackage{amsmath, amsthm, amssymb, amsfonts, mathtools}
\usepackage{bbm}

\usepackage{tikz} 
\usetikzlibrary{arrows}

\usetikzlibrary{calc}
\usetikzlibrary{shapes}
\usetikzlibrary{positioning}
\usetikzlibrary{shapes.geometric}
\usetikzlibrary{arrows.meta}

\theoremstyle{plain}
\newtheorem{lemma}{Lemma}
\newtheorem{theorem}[lemma]{Theorem}
\newtheorem{proposition}[lemma]{Proposition}

\newtheorem{corollary}[lemma]{Corollary}
\newtheorem{defi}[lemma]{Definition}
\newtheorem*{proposition*}{Proposition}
\newtheorem*{T1}{Proposition~\ref{prop:onG*}}

\theoremstyle{remark}
\newtheorem{remark}[lemma]{Remark}
\newtheorem{example}[lemma]{Example}
\newtheorem{question}[lemma]{Question}

\newcommand{\1}{\mathbbm{1}}
\newcommand{\R}{\mathbb{R}}
\newcommand{\N}{\mathbb{N}}

\newcommand{\Aut}{\mathrm{Aut}}
\newcommand{\Stab}{\mathrm{Stab}}
\newcommand{\Cay}{\mathrm{Cay}}
\newcommand{\Sch}{\mathrm{Sch}}
\newcommand{\norm}[1]{\left\lVert#1\right\rVert}	
\newcommand{\acts}{\curvearrowright}

\newenvironment{acknowledgement}{\textbf{Acknowledgement.}}{}

\usepackage{hyperref}
\numberwithin{equation}{section}
\usepackage[margin=2.6cm]{geometry}

\AtEndDocument{\bigskip{
  \noindent
  \textbf{Ferenc Bencs}\\
  Alfr\'ed R\'enyi Institute of Mathematics, Budapest, Hungary\\
  \href{mailto:bencs.ferenc@renyi.mta.hu}{\texttt{bencs.ferenc@renyi.mta.hu}} \par
  \addvspace{\medskipamount}
  
  \noindent
  \textbf{Aranka Hru\v{s}kov\'{a}}\\ 
  Central European University, Budapest, Hungary\\
  Alfr\'ed R\'enyi Institute of Mathematics, Budapest, Hungary\\
  \href{mailto:aranka@renyi.mta.hu}{\texttt{aranka@renyi.mta.hu}} \par
  \addvspace{\medskipamount}
  
 \noindent
 \textbf{L\'{a}szl\'{o} M\'{a}rton T\'{o}th}\\
 École Polytechnique Fédérale de Lausanne, Switzerland\\
 \href{mailto:laszlomarton.toth@epfl.ch}{\texttt{laszlomarton.toth@epfl.ch}} 
}}
	
\begin{document}

\title{Factor-of-iid balanced orientation of non-amenable graphs}

\author{
   Ferenc Bencs
  \and
  Aranka Hru\v{s}kov\'{a}
  \and
  L\'{a}szl\'{o} M\'{a}rton T\'{o}th
}

\date{}

\maketitle

\begin{abstract}

    We show that if a non-amenable, quasi-transitive, unimodular graph $G$ has all degrees even then it has a factor-of-iid balanced orientation, meaning each vertex has equal in- and outdegree. This result involves extending earlier spectral-theoretic results on Bernoulli shifts to the Bernoulli graphings of quasi-transitive, unimodular graphs.
    
    As a consequence, we also obtain that when $G$ is regular (of either odd or even degree) and bipartite, it has a factor-of-iid perfect matching. This generalizes a result of Lyons and Nazarov beyond transitive graphs. 
\end{abstract}

\noindent
{\it Key words and phrases:} factor of iid, graphing, spectral gap, quasi-transitivity, Schreier graph.

\section{Introduction}

Let $G$ be a simple connected graph with all degrees even. An orientation of the edges of $G$ is \emph{balanced} if the indegree of any vertex is equal to its outdegree. When $G$ is finite, the term \emph{eulerian orientation} is often used, as such an orientation can be obtained from an eulerian cycle. Our interest lies in infinite graphs, so we prioritize the term balanced. Our main result is the following.

\begin{theorem} \label{thm:non-amenable_balanced_orientation}
Every non-amenable, quasi-transitive, unimodular graph $G$  with all degrees even has a factor-of-iid orientation that is balanced almost surely. 
\end{theorem}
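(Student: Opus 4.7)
The plan is to adapt the Lyons--Nazarov spectral approach for perfect matchings to this balanced-orientation setting. First I would \emph{reformulate}: encoding an orientation as an antisymmetric function $\sigma\colon\vec E\to\{\pm 1\}$ with $\sigma(\bar e)=-\sigma(e)$, balancedness at a vertex $v$ is exactly the vanishing of the divergence $(d^{*}\sigma)(v)=\sum_{e\colon e^{+}=v}\sigma(e)$. The task thus becomes: produce a $\{\pm 1\}$-valued divergence-free flow on $G$ as a factor-of-iid.

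The core technical ingredient is a \emph{spectral gap for the Bernoulli graphing} $\mathcal G$ over $G$, extending classical spectral results for Bernoulli shifts of non-amenable groups to the Bernoulli graphings of quasi-transitive unimodular graphs (the extension advertised in the abstract). Concretely, I want the combinatorial Laplacian $\Delta=d^{*}d$ on $L^{2}$ vertex-functions of $\mathcal G$ to inherit the spectral gap of $G$ itself, so that $\Delta^{-1}$ is bounded on the zero-sum subspace. Quasi-transitivity and unimodularity are used here to place $\mathcal G$ into the p.m.p.\ framework and to have the mass-transport principle available throughout.

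Given the spectral gap, I would \emph{linearly construct} a real-valued factor-of-iid flow. Starting from an iid uniform antisymmetric $\pm 1$ edge labeling $\xi$, the orthogonal projection onto divergence-free edge functions is
\[
P\xi \;=\; \xi - d\,\Delta^{-1}\,d^{*}\xi.
\]
The operator $\Delta^{-1}$ admits a norm-convergent Neumann series whose partial sums are bounded-range, hence factors-of-iid; the limit $P\xi$ is therefore a factor-of-iid real-valued edge function that is almost surely divergence-free.

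The \emph{main obstacle} is to round $f:=P\xi$ to $\pm 1$ values while preserving both divergence-freeness and the factor-of-iid property. A natural attempt is a two-stage procedure: use $f$ to produce a fractional orientation with the correct one-dimensional marginals, then iteratively correct residual imbalances by flipping edges along factor-of-iid cycles and bi-infinite paths. Non-amenability should enter a second time to guarantee enough local expansion for the cascade of corrections to converge almost surely. I expect this rounding step, rather than the spectral linear construction, to carry the bulk of the technical difficulty in the proof.
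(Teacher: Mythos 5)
Your proposal correctly identifies the key technical input — a spectral gap for the Bernoulli graphing of $G$, which is exactly what the paper proves (Theorems~\ref{thm:non_bipart_spectral_gap} and~\ref{thm:bipart_spectral_gap}) — but then diverges from the paper's route, and the divergence lands you on the hard part of the problem without a way across.

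The paper does \emph{not} attempt to produce a real-valued divergence-free flow and then round it. Instead it makes a combinatorial reduction: it replaces $G$ by the locally-defined bipartite auxiliary graph $G^*$ (each vertex $v$ blown up into $\deg(v)/2$ copies, each edge into a single vertex joined to all copies of its endpoints), so that perfect matchings of $G^*$ biject with balanced orientations of $G$. The spectral gap of the Bernoulli graphing of $G$ is then converted (via Lemma~\ref{lemma:cheeger_implies_aux_bipart_expansion}) into bipartite vertex expansion for the auxiliary graphing $\mathcal{G}^*$, after which the Lyons--Nazarov measurable-matching machinery (Theorem~\ref{thm:bipartite_graphing_perfect_matching}) supplies a Borel perfect matching a.s., and hence the desired orientation. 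All of the ``integrality'' difficulty is absorbed by the matching theorem, whose augmenting-path argument is already known to work in the graphing setting.

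Your plan replaces this with the orthogonal projection $P\xi = \xi - d\Delta^{-1}d^{*}\xi$. The linear part is fine: the spectral gap does make $\Delta^{-1}$ bounded on the zero-mean subspace, the Neumann series gives block-factor approximants, and $P\xi$ is a factor-of-iid real-valued divergence-free edge function. But you then have to turn a generic real-valued divergence-free flow into a $\{\pm 1\}$-valued divergence-free flow as a factor of iid, and you supply no mechanism for this beyond the wish that ``flipping edges along factor-of-iid cycles and bi-infinite paths'' will converge. That is not a rounding step; it is a restatement of the theorem. Concretely, there is no reason $P\xi$ avoids values in $(-1,1)$ or zero, and rounding componentwise destroys the divergence constraint; repairing it requires measurably routing integer defects along paths or cycles, which in the non-amenable/graphing setting is precisely the kind of combinatorial problem (unending augmenting paths, mass-transport bookkeeping) that the Lyons--Nazarov argument was built to handle. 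So the missing idea in your proposal is the reduction to perfect matchings in $G^*$: once you have it, the rounding problem disappears, and the spectral gap you (correctly) flagged plugs straight into existing matching results.

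Two smaller points. First, the paper's spectral statement is about the Markov operator on $L^2(\Omega,\nu_{\mathrm{st}})$ with the degree-biased measure, and in the quasi-transitive case the orbit Markov chain $M_T$ may be bipartite, in which case one must work on $L_{00}^2$ and exclude \emph{both} eigenfunctions $\mathbbm{1}$ and $\mathbbm{1}_{X_1}-\mathbbm{1}_{X_2}$; ``the Laplacian inherits the spectral gap of $G$'' glosses over this and over the fact that the paper explicitly does \emph{not} bound $\|\mathcal M\|$ by $\|M_G\|$. Second, factors of iid take vertex labels as input, so your initial iid edge labeling $\xi$ must itself be generated from vertex labels — routine, but worth stating.
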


The precise definitions of these notions are given in Section~\ref{section:basics}. Non-amenable means that all finite subsets of $G$ expand, quasi-transitive means $G$ has finitely many types of vertices, and unimodularity is a reversibility condition of the simple random walk on  $G$. A balanced orientation is a factor of iid if it is produced by a certain randomized local algorithm. To start with, each vertex of $G$ gets a random label from $[0,1]$ independently and uniformly. Then it makes a deterministic measurable decision about the orientation of its incident edges, based on the labelled graph that it sees from itself as a root. Neighboring vertices must make a consistent decision regarding the edge between them. To make the statements of our results less cumbersome, instead of saying ``a factor-of-iid orientation of the edges that is balanced almost surely'' we will simply say ``factor-of-iid balanced orientation''. (The naming is analogous for other decorations of vertices or edges.)
 
Obtaining combinatorial structures or certain models in statistical mechanics as factors of iid is a central topic in ergodic theory. See~\cite{lyons2017factors} and the references therein for an overview in the non-amenable setting. 

All Cayley graphs, in particular regular trees are unimodular.
For $d>1$, the $2d$-regular tree $T_{2d}$ is also non-amenable, so it is covered by Theorem~\ref{thm:non-amenable_balanced_orientation}. Note that on $T_{2d}$ there is a unique invariant random balanced orientation, which by Theorem~\ref{thm:non-amenable_balanced_orientation} is a factor of iid. Moreover, this result cannot be obtained by measurable versions of Lovász Local Lemma, see Remark~\ref{rmk:LLL}.

Our interest in balanced orientations is due to the fact that on a $2d$-regular graph a balanced orientation is a partial result towards a Schreier decoration. A \emph{Schreier decoration} of $G$ is a coloring of the edges with $d$ colors together with an orientation such that at every vertex, there is exactly one incoming and one outgoing edge of each color. It is a combinatorial coding of an action of the free group $F_d$ on the vertex set of the graphs. Every Schreier decoration gives a balanced orientation by forgetting the colors. In \cite{toth2019invariant}, the third author proved that all $2d$-regular unimodular random rooted graphs admit an invariant random Schreier decoration, and the current authors show in a parallel work \cite{ourselves} that such invariant random Schreier decorations can be obtained as a factor of iid in Euclidean grids in all dimensions greater than 1 as well as on all Archimedean (planar) lattices of even degree.

It remains an open question whether there indeed is a factor-of-iid Schreier decoration of $T_{2d}$. In \cite{thornton2020factor}, Thornton studies when graphs have factor-of-iid Cayley diagrams. Finding a Cayley diagram of a fixed group as a random decoration comes with (compared to a Schreier decoration) additional local restrictions on how the decoration should behave on loops. Nevertheless, the question of finding a Cayley diagram of $F_d$ overlaps with our interest in Schreier decorations on $T_{2d}$. Thornton has a result on factor-of-iid Cayley diagrams on non-amenable graphs (\cite[Theorem 1.7]{thornton2020factor}) that provides \emph{approximate} Cayley diagrams, but we do not allow for a small-probability local error here.

\smallskip

The proof of Theorem~\ref{thm:non-amenable_balanced_orientation} relies on two main ingredients. First, we reduce the question of finding a balanced orientation of $G$ to finding a perfect matching in an auxiliary graph $G^*$. Figure~\ref{fig:trafo} illustrates the construction, whose precise formulation is given in Section~\ref{sec:balanced_orientation}.

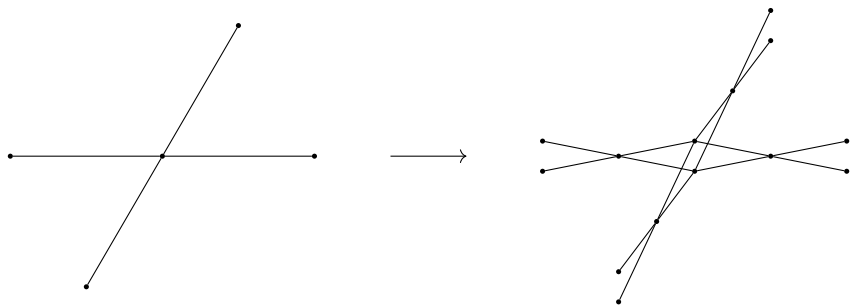
\begin{figure}[h]
    \centering
    \begin{tikzpicture}[node distance=1cm,
    main node/.style={circle,minimum width=1.9pt, fill, inner sep=0pt,outer sep = 0pt},
    red node/.style={circle,minimum width=2.2pt, fill,red, inner sep=0pt,outer sep = 0pt},
    blue node/.style={circle,minimum width=2.2pt, fill,blue, inner sep=0pt,outer sep = 0pt},
    square node/.style={draw,regular polygon,regular polygon sides=4,fill,inner sep=0.9pt,outer sep=0.1pt},
    triangle node/.style={draw,regular polygon,regular polygon sides=3,fill,inner sep=0.8pt,outer sep=0.1pt}]   
    
    \node[main node] (0) at (0,0) {};
    \node[main node] (v1) at (2,0) {};
    \node[main node] (v2) at (60:2) {};
    \node[main node] (v3) at (-2,0) {};
    \node[main node] (v4) at (240:2) {};

    \coordinate (eltolas) at (7,0) {};
    \coordinate (eltolasf)  at (0,0.2) {};
    \coordinate (eltolasa)  at (0,-0.2) {};
    
    \node[main node] (0f) at ($(0)+(eltolas)+(eltolasf)$) {};
    \node[main node] (0a) at ($(0)+(eltolas)+(eltolasa)$) {};
    
    \node[main node] (v1f) at ($(v1)+(eltolas)+(eltolasf)$) {};
    \node[main node] (v1a) at ($(v1)+(eltolas)+(eltolasa)$) {};
    
    \node[main node] (v2f) at ($(v2)+(eltolas)+(eltolasf)$) {};
    \node[main node] (v2a) at ($(v2)+(eltolas)+(eltolasa)$) {};
    
    \node[main node] (v3f) at ($(v3)+(eltolas)+(eltolasf)$) {};
    \node[main node] (v3a) at ($(v3)+(eltolas)+(eltolasa)$) {};
    
    \node[main node] (v4f) at ($(v4)+(eltolas)+(eltolasf)$) {};
    \node[main node] (v4a) at ($(v4)+(eltolas)+(eltolasa)$) {};
    
    \node[main node] (e1) at ($(1,0)+(eltolas)$) {};
    \node[main node] (e2) at ($(60:1)+(eltolas)$) {};
    \node[main node] (e3) at ($(-1,0)+(eltolas)$) {};
    \node[main node] (e4) at ($(240:1)+(eltolas)$) {};
    
    \path [draw=gray, thin] (0) edge (v1);
    \path [draw=gray, thin] (0) edge (v2);
    \path [draw=gray, thin] (0) edge (v3);
    \path [draw=gray, thin] (0) edge (v4);
    
    \path [draw=gray, thin] (0f) edge (e1);
    \path [draw=gray, thin] (0f) edge (e2);
    \path [draw=gray, thin] (0f) edge (e3);
    \path [draw=gray, thin] (0f) edge (e4);
    \path [draw=gray, thin] (0a) edge (e1);
    \path [draw=gray, thin] (0a) edge (e2);
    \path [draw=gray, thin] (0a) edge (e3);
    \path [draw=gray, thin] (0a) edge (e4);
    
    \path [draw=gray, thin] (e1) edge (v1f);
    \path [draw=gray, thin] (e1) edge (v1a);
    \path [draw=gray, thin] (e2) edge (v2f);
    \path [draw=gray, thin] (e2) edge (v2a);
    \path [draw=gray, thin] (e3) edge (v3f);
    \path [draw=gray, thin] (e3) edge (v3a);
    \path [draw=gray, thin] (e4) edge (v4f);
    \path [draw=gray, thin] (e4) edge (v4a);
    
    \draw[->] (3,0) -- (4,0);
    
    \end{tikzpicture}
    
    \caption{Obtaining $G^*$ from $G$ -- the combinatorial transformation around a vertex of degree 4}
    \label{fig:trafo}
\end{figure}

Second, we apply earlier matching results of Lyons and Nazarov \cite{LyonsNazarov}, who proved that bipartite, non-amenable Cayley graphs have a factor-of-iid perfect matching. Csóka and Lippner extended this to all non-amenable Cayley graphs in \cite{csoka2017invariant}. 

In order to deduce Theorem~\ref{thm:non-amenable_balanced_orientation} from these matching results, we have to establish vertex expansion in the appropriate Bernoulli graphing (see subsection~\ref{subsec:bernoulli_graphing} for the definition). We do this via spectral theory, and state our spectral-theoretic result here because we believe it is of interest in itself.

\begin{theorem} \label{thm:spectral_gap}
Let $G$ be a connected, unimodular, quasi-transitive graph. If $G$ is non-amenable then its Bernoulli graphing $\mathcal{G}$ has positive spectral gap. 
\end{theorem}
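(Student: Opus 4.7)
The plan is to reduce the spectral gap for the Bernoulli graphing $\mathcal{G}$ to the well-known spectral gap of the simple random walk on $G$ itself, via a Koopman-representation / weak-containment argument in the style of Ab\'ert--Glasner--Vir\'ag.

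The first step is to realize $\mathcal{G}$ as arising from a measure-preserving action of a unimodular locally compact group $\Gamma \leq \Aut(G)$. Concretely, $X = [0,1]^{V(G)}$ carries the product (Bernoulli) measure and the obvious $\Gamma$-action; the Markov operator $M_\mathcal{G}$ of the graphing can then be written, in the associated Koopman representation $\pi$, as an average $\tfrac{1}{\deg}\sum_s \pi(s)$, the sum ranging over group elements that move a representative root to its neighbors. Quasi-transitivity forces this description to be carried out as a finite bundle indexed by the orbits of $V(G)/\Gamma$, with weights governed by the sizes of the vertex stabilizers; these are the correct weights precisely because $\Gamma$ is unimodular.

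The second step is to invoke the classical fact that the Koopman representation of the Bernoulli shift of $\Gamma$, restricted to the mean-zero subspace $L^2_0(X)$, is weakly contained in the left-regular representation $\lambda_\Gamma$. Ergodicity of the Bernoulli shift guarantees that the only $M_\mathcal{G}$-invariant $L^2$ functions are constants, so a positive spectral gap is the same as $\|M_\mathcal{G}|_{L^2_0}\| < 1$. Weak containment then yields
\[
\bigl\| M_\mathcal{G}\bigr|_{L^2_0}\bigr\| \;\leq\; \bigl\| \textstyle\frac{1}{\deg} \sum_s \lambda_\Gamma(s) \bigr\|,
\]
and the right-hand side is, up to the standard bookkeeping coming from quasi-transitivity, exactly the spectral radius $\rho(G)$ of the simple random walk on $G$. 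By the quasi-transitive unimodular version of Kesten's criterion, non-amenability of $G$ forces $\rho(G) < 1$, completing the argument.

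The main obstacle I foresee lies in executing the first step cleanly: $\mathcal{G}$ is not literally the orbit-equivalence graphing of a single group acting on a single probability space, but rather a groupoid model whose base keeps track of the root, and one must verify that (i) $M_\mathcal{G}$ corresponds precisely to the averaging operator in an appropriate Koopman-type representation, and (ii) the invariant vectors of that representation correspond exactly to $L^2$-constants on $\mathcal{G}$. Getting the normalizations right -- matching the weighted $\ell^2$ structure on $V(G)$ dictated by the vertex stabilizers to the measure on the base of the groupoid -- is where unimodularity enters essentially. Once that framework is in place, the weak-containment step and the norm comparison proceed essentially as in the transitive Cayley-graph setting.
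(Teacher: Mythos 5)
Your approach is genuinely different from the paper's, but it has a gap that is not mere bookkeeping.

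The paper's proof (Theorems~\ref{thm:non_bipart_spectral_gap} and~\ref{thm:bipart_spectral_gap}) proceeds via Lemma~\ref{lemma:corsp}, taking a spanning set $H$ of block-radius-$r$ functions and estimating $\langle \mathcal{M}^k f, f\rangle$ directly: they split the random-walk sum into contributions from endpoints within distance $2r$ of the root (controlled by non-amenability, i.e.\ $p_k(o,y)\lesssim (\rho+\varepsilon)^k$) and contributions from far endpoints (where labels are independent, so the inner product factors and the sum reduces to the finite orbit-type Markov chain $M_T$, controlled by its convergence rate $\rho_T$). The final bound is $\max(\rho, \rho_T)$, not $\rho$.

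The concrete obstruction to your weak-containment argument is the finite-dimensional subspace of functions on $\Omega$ that depend only on the orbit type of the root and not on the iid labels at all. On this $t$-dimensional subspace, $\mathcal{M}$ acts exactly as the finite transition matrix $M_T$: if $f(G, o_i, \omega) = g(i)$, then $(\mathcal{M} f)(G, o_i, \omega) = (M_T g)(i)$. Hence every eigenvalue of $M_T$ other than $1$ (and $-1$ in the bipartite case) lies in $\sigma(\mathcal{M}|_{L^2_0})$. None of these functions sees the iid labels, so there is no mixing argument that can drive them down; and they do not embed into $\ell^2(V(G))$, so they are invisible to the spectral radius $\rho$ of $M_G$. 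Your proposed conclusion $\|\mathcal{M}|_{L^2_0}\|\le \rho(G)$ is therefore not correct in general: the authors flag precisely this in the introduction, noting that their proof does \emph{not} bound $\|\mathcal{M}\|$ by $\|M_G\|$, and that $-1$ can genuinely be in the spectrum when $M_T$ is bipartite. The correct statement must separate off both the constants \emph{and} the parity function $\mathbbm{1}_{X_1}-\mathbbm{1}_{X_2}$ in the bipartite case, and the bound must include $\rho_T$. Any repair of your Koopman-representation argument would have to isolate the orbit-type subspace and handle it separately via the finite Markov chain, at which point you are essentially reproducing the decomposition in the paper's proof.
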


The interpretation of spectral gap is slightly different depending on the Bernoulli graphing being measurably bipartite or not. See Theorems~\ref{thm:non_bipart_spectral_gap} and~\ref{thm:bipart_spectral_gap} for exact statements.

Our proof of Theorem~\ref{thm:spectral_gap} requires more sophistication than simply repeating earlier arguments in a more general setting. To emphasize this, we point out that (unlike in the transitive case) $-1$ can indeed be part of the spectrum. Also our proof does not bound $\norm{\mathcal{M}}$ above by $\norm{M_G}$, where $\norm{M_G}$ is the operator norm of the Markov operator $M_G$ on $\ell^2$(V(G)), while for Cayley graphs, one has $\norm{\mathcal{M}}\leq\norm{M_G}$. 

As a consequence of Theorem~\ref{thm:spectral_gap}, we also obtain the following generalisation of the result of Lyons and Nazarov.

\begin{corollary} \label{cor:quasi_transitive_matching}
Let $G$ be a connected, unimodular, quasi-transitive non-amenable regular bipartite graph. Then $G$ has a factor-of-iid perfect matching.
\end{corollary}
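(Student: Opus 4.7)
The plan is to follow the original argument of Lyons and Nazarov, substituting Theorem~\ref{thm:spectral_gap} for the spectral input they had available only in the Cayley setting. Their scheme is measure-theoretic at heart and needs essentially two things about the ambient object: a positive spectral gap of the Markov operator on the Bernoulli graphing, and enough invariance to compare measures on the two sides of the bipartition. Unimodularity supplies the latter via the mass transport principle.

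First, I would lift the bipartition to $\mathcal{G}$. Since $G$ is connected and bipartite, the bipartition $V(G)=V_1\sqcup V_2$ is unique and $\Aut(G)$-invariant, so ``the root lies in $V_1$'' is a measurable factor of the isomorphism class of the rooted Bernoulli-decorated graph; regularity together with quasi-transitivity makes the two sides of equal mass. Thus $\mathcal{G}$ is a measurably bipartite graphing, and the correct form of Theorem~\ref{thm:spectral_gap} to invoke here is the measurably bipartite version (Theorem~\ref{thm:bipart_spectral_gap}): after quotienting out the eigenspaces at $\pm 1$ spanned by indicators of the two sides, the Markov operator $\mathcal{M}$ has norm strictly less than $1$.

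Next, I would run the Lyons-Nazarov matching construction on $\mathcal{G}$. Their strategy is to produce the perfect matching as a limit of edge-marginals obtained from a contractive cavity / power-iteration recursion, where the contraction rate is controlled by the spectral gap of the bipartite Markov operator. The spectral gap forces the iterates to converge exponentially in $L^2$, and a standard rounding / weak-limit argument promotes the limiting fractional matching to a true measurable perfect matching. Every estimate in this chain is driven purely by the spectral gap, so once Theorem~\ref{thm:spectral_gap} is in hand, the analytic core of their proof transfers directly.

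The main obstacle is the bookkeeping in Step~2: Lyons and Nazarov freely exploit the transitive group action on a Cayley graph when comparing measures on the two sides and when verifying that local constructions aggregate into a global factor-of-iid object. I would replace each such use by the mass transport principle, which is available here by unimodularity, and I would handle quasi-transitivity by doing the estimates orbit-by-orbit over the finitely many vertex types and then combining. This is where I expect the argument to need the most care, but no new analytic idea should be required beyond what Theorem~\ref{thm:spectral_gap} already supplies.
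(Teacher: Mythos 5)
There is a genuine gap in your first step. You claim that because $G$ is connected and bipartite, ``the root lies in $V_1$'' is a measurable factor, hence $\mathcal{G}$ is measurably bipartite. This is false in general: the \emph{unordered} pair $\{V_1,V_2\}$ is $\Aut(G)$-invariant, but the individual parts need not be, and if some automorphism swaps them then there is no $\Aut(G)$-invariant way to assign a side to the root. The flagship example $G=T_{2d}$ already falls into this case: $\Aut(T_{2d})$ is vertex-transitive and swaps the bipartition classes, so ``root in $V_1$'' is not a function of the rooted isomorphism type, and in fact the Bernoulli graphing of $T_{2d}$ admits no measurable proper $2$-coloring at all (the same correlation-decay obstruction the paper invokes in Proposition~\ref{prop:observations}\ref{itm:orientation_not_finishable}). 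So $\mathcal{G}$ is \emph{not} measurably bipartite here, $L^2_{00}$ is undefined, and Theorem~\ref{thm:bipart_spectral_gap} does not apply.

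The correct dichotomy — which the paper's proof uses — is governed by whether the finite Markov chain $M_T$ on orbit types is bipartite, not by whether $G$ itself is. When $M_T$ is not bipartite (automatic when $G$ is transitive, since then $t=1$), one applies Theorem~\ref{thm:non_bipart_spectral_gap} to get a gap on $L^2_0$, then Lemma~\ref{lemma:LyonsNazarov_spectral_gap_implies_expansion} to get vertex expansion, then Theorem~\ref{thm:graphing_perfect_matching}, whose hypothesis is merely that $\mathcal{G}$ has no odd cycles (which does follow from $G$ being bipartite) — measurable bipartiteness is not required. Only when $M_T$ \emph{is} bipartite does your route through Theorem~\ref{thm:bipart_spectral_gap}, Lemma~\ref{lemma:measurably_bipart_expansion_withoutregularity}, and Theorem~\ref{thm:bipartite_graphing_perfect_matching} apply. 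You also elide the intermediate step of converting the spectral gap into a vertex-expansion bound (Lemmas~\ref{lemma:LyonsNazarov_spectral_gap_implies_expansion} and~\ref{lemma:measurably_bipart_expansion_withoutregularity}); the paper does not rerun the Lyons--Nazarov iteration but cites Theorems~\ref{thm:graphing_perfect_matching} and~\ref{thm:bipartite_graphing_perfect_matching} as black boxes taking expansion as input, and uses regularity precisely so that the plain measure $\nu$ and the degree-biased $\nu_{\textrm{st}}$ coincide in those statements.
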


The bipartite assumption in Corollary~\ref{cor:quasi_transitive_matching} cannot be dropped because there are unimodular, quasi-transitive regular graphs that have no perfect matching at all, see Remark~\ref{rmk:quasi_trans_no_perfect_matching}. 
Regularity cannot be dropped either, as for example bi-regular trees (of two different degrees of regularity) have no factor-of-iid perfect matching. 

\smallskip

We also discuss how far Theorem~\ref{thm:non-amenable_balanced_orientation} goes towards obtaining Schreier decorations on the regular tree $T_{2d}$. 

\begin{proposition} \label{prop:observations}
Regarding Schreier decorations of $T_{2d}$ we observe the following.
\begin{enumerate}[(i)]
    \item \label{itm:if_edge_coloring_then_sch_dec}
    If $T_d$ has a factor-of-iid proper edge coloring with $d$ colors then $T_{2d}$ has a factor-of-iid Schreier decoration.
    
    \item \label{itm:colourblind_Schreier}
    $T_{2d}$ has a factor-of-iid Schreier decoration with the last two colors unordered.
    
    \item \label{itm:orientation_not_finishable}
    Let $\Vec{T_4}$ denote the tree $T_4$ with edges oriented in a balanced way. ($\Vec{T_4}$ is unique up to isomorphism.) There is no $\Aut(\Vec{T_4})$-factor-of-iid Schreier decoration of $\Vec{T_4}$ with the additional property that after forgetting the colors, it coincides with the original orientation of $\Vec{T_4}$.
    
    \item \label{itm:if_sch_dec_then_bigger_sch_dec}
    For every positive integer $d$, if $T_{2d}$ has a factor-of-iid Schreier decoration then so does $T_{2d+2}$.
\end{enumerate}
\end{proposition}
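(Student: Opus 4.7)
The plan is to handle (i), (ii), and (iv) via a unified \emph{split graph} construction and to attack (iii) by an ergodicity argument. Given a balanced orientation of a $2k$-regular tree $G$, form the split graph $H$ by replacing each vertex $v$ of $G$ by two copies $v^+, v^-$ and drawing an edge $\{u^+,v^-\}$ for each oriented edge $u\to v$. Then $H$ is $k$-regular bipartite, and since any cycle in $H$ would project to a cycle in $G$, the split graph is a forest whose components are copies of $T_k$. A Schreier decoration of $G$ extending the given orientation is exactly a proper edge $k$-colouring of $H$. For (i), apply Theorem~\ref{thm:non-amenable_balanced_orientation} to obtain a factor-of-iid balanced orientation of $T_{2d}$, build $H$, feed iid labels on $V(H)$ (obtained from a pair of independent labels per vertex of $T_{2d}$) to the hypothesized factor-of-iid proper edge $d$-colouring of $T_d$, and decode back.

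For (ii) the same setup applies, but instead of a full edge colouring we iteratively peel off factor-of-iid perfect matchings from $H$. At stage $k=d,d-1,\dots,3$ the remaining graph is a disjoint union of copies of $T_k$, which is non-amenable, unimodular, bipartite, and regular, so Corollary~\ref{cor:quasi_transitive_matching} (or directly Lyons--Nazarov) produces the required matching. After $d-2$ stages, the leftover $T_2$-forest plays the role of the unordered 2-factor. The base case $d=2$ needs no peeling, and the unordered pairing at each vertex of $T_4$ is chosen directly by a local comparison of iid labels at the two in- and out-neighbours. For (iv) we apply the split-graph construction to $T_{2d+2}$: a factor-of-iid perfect matching of the $T_{d+1}$-components of $H$ (available from Corollary~\ref{cor:quasi_transitive_matching} as soon as $d\ge 2$) corresponds to a factor-of-iid oriented 2-factor $F$ of $T_{2d+2}$. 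The complement $T_{2d+2}\setminus F$ is a $2d$-regular forest whose components are copies of $T_{2d}$, so we apply the hypothesized Schreier decoration to each and assign $F$ the new colour $d{+}1$. For $d=1$ the hypothesis is vacuous, since $T_2=\mathbb{Z}$ admits no $\Aut(T_2)$-equivariant choice of orientation.

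The main difficulty is (iii). The plan is to extract an ergodicity obstruction along a specially chosen axis. Pick a doubly-infinite ``alternating'' path $\gamma=\ldots v_{-1}v_0v_1v_2\ldots$ in $\vec{T_4}$ such that at each $v_i$ the two path-neighbours are either both in-neighbours (for $i$ even) or both out-neighbours (for $i$ odd); such $\gamma$ always exists. In any Schreier decoration consistent with $\vec{T_4}$ the two colours must alternate along the edges of $\gamma$, so the entire colouring on $\gamma$ is determined by a single $\{0,1\}$-valued factor $c_0(X)$ at a reference edge $f_0$. The reflection $\rho_i$ of the whole tree through $v_i$ lies in $\Aut(\vec{T_4})$: at $v_i$ it swaps the two in-neighbours and independently the two out-neighbours (both pairs carry a common orientation type by the alternating pattern), and this extends recursively to a swap of the symmetric off-axis subtrees. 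Equivariance of the decoration then forces $c_0(\rho_i X)=1-c_0(X)$ whenever $\rho_i$ maps $f_0$ to an edge of $\gamma$ at odd offset. Composing two such reflections through distinct vertices of $\gamma$ yields a nontrivial translation along $\gamma$, which is a loxodromic hence free element of $\Aut(\vec{T_4})$; the induced Bernoulli $\mathbb{Z}$-shift on the iid labels is ergodic. Since $c_0$ is invariant under this translation, ergodicity forces it to be a.s.\ constant, contradicting $c_0(\rho_i X)=1-c_0(X)$. The delicate steps will be verifying that the relevant reflections indeed lie in $\Aut(\vec{T_4})$ and that their composition acts freely on $V(T_4)$, hence ergodically on the iid label space.
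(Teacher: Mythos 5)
Your parts (i) and (iv) are correct and in each case essentially the paper's argument, cleanly recast in the language of the split graph $H$. For (i) the decomposition of $T_{2d}$ into $d$-regular ``in''- and ``out''-subtrees via the split graph is precisely the paper's construction. For (iv) your variant is in fact a bit slicker than the paper's: you produce the new color class directly as an oriented $2$-factor from a single perfect matching of $H$ (using the pre-computed balanced orientation to make it a valid color class), whereas the paper produces two undirected matchings, runs the $T_{2d}$-decoration twice, and uses a ``swap the color $c_1$ in'' trick to avoid the problem of orienting the $\mathbb{Z}$-paths of $M\cup M'$ as a factor of iid — a problem your construction sidesteps by design. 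Your caveat for $d=1$ is also right.

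Part (iii) is a genuinely different argument and, as far as I can tell, a correct one. The paper argues by correlation decay: colors at the two ends of a long alternating path are perfectly correlated, contradicting the decay of correlations of a factor of iid along the edge-transitive automorphism group. You instead build a translation $\tau=\rho_i\rho_j$ from two orientation-preserving reflections along an alternating axis, note that the color bit $c_0$ at a reference edge is $\tau$-invariant but $\rho_i$-anti-invariant, and use ergodicity of the Bernoulli $\mathbb{Z}$-shift induced by $\tau$. Both arguments hinge on the same combinatorial observation that colors alternate deterministically along alternating paths; yours makes the group dynamics more explicit (and requires verifying $\rho_i\in\Aut(\vec{T_4})$ and the free, hence ergodic, action of $\tau$, which you flag correctly as the delicate steps).

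Part (ii) has a real gap for $d>2$. Peeling $d-2$ factor-of-iid matchings from each $T_d$-component of $H$ gives you the first $d-2$ color classes and the union $F=c_{d-1}\cup c_d$ of the last two, but not the partition $\{c_{d-1},c_d\}$ itself — and a point of ${\tt Sch}^*(T_{2d})$ requires the latter (the forgetful map ${\tt Sch}(T_{2d})\to{\tt Sch}^*(T_{2d})$ is $2$-to-$1$, not uncountable-to-$1$). Even if you add a random local pairing at every vertex for $d>2$ (as you do for $d=2$), this determines the unordered pair only on each connected component of the $4$-regular subforest $F\subseteq T_{2d}$, and for $d>2$ the $T_4$-components of $F$ partition $V(T_{2d})$ into infinitely many pieces with no shared vertices, so the ``chain graph'' you need to $2$-color is disconnected and the choices on different components are not synchronized. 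The paper gets around this precisely by invoking Lyons's result, which furnishes the \emph{pair} $\{M_{d-1},M_d\}$ (not merely their union) on each $T_d$-component of $H$; the random bijection at each vertex then propagates this choice across subtrees, and consistency holds because the ``subtree graph'' obtained from $T_{2d}$ is a tree. Your peeling alone does not reproduce the content of Lyons's theorem, which is exactly the missing synchronization.
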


It is an open question whether $T_d$ (for $d>2$) has a factor-of-iid proper edge coloring by $d$ colors. Part~\ref{itm:colourblind_Schreier} utilizes the partial result towards such a factor-of-iid proper edge coloring presented in \cite{lyons2017factors}; see subsection~\ref{subsec:proper_colouring_vs_Schreierization} for further comments. Note, however, that by part~\ref{itm:orientation_not_finishable}, obtaining a factor-of-iid Schreier decoration of $T_{4}$ cannot be achieved by selecting a balanced orientation first and then choosing the colors without modifying the orientation. This observation is unique to degree 4 because it relies on the $2$-regular tree, otherwise known as the bi-infinite path, not having a factor-of-iid proper edge coloring with two colors. For higher degree, a construction might be finished this way, as pointed out in part~\ref{itm:if_edge_coloring_then_sch_dec}.

Finally, regarding the auxiliary graph $G^*$ we show that existences of different factors of iid are equivalent.

\begin{proposition}\label{prop:onG*}
For every $2d$-regular graph $G$, the bipartite graph $G^*$ is also $2d$-regular, and the following are equivalent.
\begin{enumerate}
\item $G^*$ has got a factor-of-iid proper edge $2d$-coloring.
\item $G^*$ has got a factor-of-iid perfect matching.
\item $G^*$ has got a factor-of-iid Schreier decoration.
\end{enumerate}
Moreover, if any of these is a finitary factor, the others are too. 
\end{proposition}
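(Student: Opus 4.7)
First, I verify the structural claim. The construction of $G^*$ places the $d$ copies of each vertex of $G$ on one side $A$ of the bipartition and a single midpoint $w_e$ for each edge $e \in E(G)$ on the other side $B$: each copy of $v$ is adjacent to the midpoints of the $2d$ edges through $v$, and each $w_e$ is adjacent to the $d$ copies of each of its two endpoints. Hence $G^*$ is $2d$-regular and bipartite.

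The implication $(1) \Rightarrow (2)$ is immediate, as any color class of a proper $2d$-edge-coloring is a perfect matching. For $(1) \Leftrightarrow (3)$ I use a local conversion: given a proper $2d$-edge-coloring with classes $M_1, \ldots, M_{2d}$, pair them as $\{M_i, M_{i+d}\}_{i=1}^d$, declare every edge of $M_i \cup M_{i+d}$ to carry Schreier color $i$, and orient $M_i$ from $A$ to $B$ and $M_{i+d}$ from $B$ to $A$. At every vertex this produces exactly one in-edge and one out-edge of each Schreier color. Conversely, splitting each Schreier color class by orientation direction yields a proper $2d$-edge-coloring. Both operations are local and thus transfer factor-of-iid and finitary-factor status.

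The main step is $(2) \Rightarrow (1)$, where I exploit the twin structure of $G^*$. Given a factor-of-iid perfect matching $M$, first read off the balanced orientation of $G$ it induces: an edge $e = \{u, v\}$ of $G$ is oriented $u \to v$ iff $w_e$ is matched in $M$ to one of the $d$ copies of $v$. This partitions the midpoints at $v$ into in-edge midpoints $E^+(v)$ and out-edge midpoints $E^-(v)$, both of size $d$, and $M$ itself yields a bijection $\alpha_v \colon C_v \to E^+(v)$ on the copies $C_v = \{v_1, \ldots, v_d\}$ of $v$. Using additional iid labels on $C_v$ and on $E^-(v)$---for instance by sorting each side by its labels and pairing in order---I build a second bijection $\beta_v \colon C_v \to E^-(v)$, yielding an auxiliary perfect matching $M^*$ of $G^*$ that is also a factor of iid.

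Finally, using the iid-induced rank order on $C_v$, I define a cyclic shift $\mathrm{cshift}_k$ on $C_v$ for $k = 0, 1, \ldots, d-1$, and set $M^{(k)}$ (respectively $M^{*(k)}$) to be the global matching whose local bijection at each $v$ is $\alpha_v \circ \mathrm{cshift}_k$ (respectively $\beta_v \circ \mathrm{cshift}_k$). At each $v$, the $d$ matchings $M^{(0)}, \ldots, M^{(d-1)}$ collectively exhaust the $d^2$ pairs in $C_v \times E^+(v)$, and likewise $M^{*(0)}, \ldots, M^{*(d-1)}$ exhaust $C_v \times E^-(v)$. Since $E^+(v)$ and $E^-(v)$ are disjoint, the $2d$ resulting matchings are pairwise disjoint and partition $E(G^*)$, giving the desired proper $2d$-edge-coloring. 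Every step depends only on $M$ and local iid labels in an $\Aut(G^*)$-equivariant way, so the coloring is a factor of iid, and a finitary factor whenever $M$ is, establishing the ``moreover'' clause.
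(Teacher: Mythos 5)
Your proposal is correct and takes essentially the same approach as the paper: both proofs hinge on the observation that a perfect matching of $G^*$ decomposes the remaining edges into vertex-disjoint $K_{d,d}$ subgraphs, which can then be properly $d$-colored using fresh iid labels, and both use the bipartition to pass between proper $2d$-colorings and Schreier decorations by pairing color classes and orienting by side. The only differences are cosmetic: you prove the cycle $(1)\Rightarrow(2)\Rightarrow(1)$ plus $(1)\Leftrightarrow(3)$ while the paper proves five implications ``to emphasize the techniques,'' and your explicit cyclic-shift construction of the $K_{d,d}$-colorings is a concrete instance of the paper's ``pick a proper edge $d$-coloring at random.'' One small point worth adding: your use of the bipartition $A$, $B$ requires that vertices of $G^*$ can determine their type (vertex-type vs.\ edge-type) as a local factor-of-iid; the paper notes this is possible for $d\geq2$ and separately disposes of the degenerate cases $G=P$ (the bi-infinite path, where all three conditions fail) and $G=C_k$ (where all three hold).
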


For the definition of \emph{finitary} factors see subsection~\ref{subsec:fiid}.

The structure of the paper is as follows. In Section~\ref{section:basics}, we introduce the necessary notions and existing results. In Section~\ref{section:spectral_gap}, we prove our spectral-theoretic result, Theorem~\ref{thm:spectral_gap}. We deduce Corollary~\ref{cor:quasi_transitive_matching} in Section~\ref{section:perfect_matchings}, and in Section~\ref{sec:balanced_orientation}, we prove our main result, Theorem~\ref{thm:non-amenable_balanced_orientation}. Our results on other types of decorations are collected in Section~\ref{section:decorations}. Section~\ref{section:open_questions} lists some open questions.

\smallskip

{\bf Addendum.} After our manuscript was made available online, Riley Thornton brought to our attention that his Theorem~2.8 in \cite{thornton2020orienting} provides a balanced orientation in $2d$-regular graphings with expansion. Since Backhausz, Szegedy, and Virág show in \cite[Theorem 2.2]{backhausz2015ramanujan} that the Bernoulli graphing of $T_{2d}$ does have expansion, a factor-of-iid balanced orientation of $T_{2d}$ can also be obtained by combining these two results.

\begin{acknowledgement}
The authors would like to thank Miklós Abért, Jan Grebík, Matthieu Joseph, Gábor Kun, Gábor Pete, and Václav Rozhoň for inspiring discussions about various parts of this work.
The first author is supported by the NKFIH (National Research, Development and Innovation Office, Hungary) grant KKP-133921.
\end{acknowledgement}

\section{Notation and basics}\label{section:basics}

Some of the descriptions in this section are identical to the ones in our parallel work \cite{ourselves}.

\subsection{Graphs}

A graph $G$ is given by its vertex set $V(G)$ and edge set $E(G)$, where $E(G)\subset V(G)^{(2)}$ is a collection of 2-element subsets of $V(G)$ and we write $uv$ for the subset $\{u,v\}$. For any subset $A\subset V(G)$, we denote by $N_G(A)$ the neighborhood of $A$, that is $\{u\in V(G) : \exists v\in A \text{ such that } uv\in E(G)\}$. We use the calligraphic $\mathcal{G}$ for graphs that have a probability measure associated to them that makes them a graphing (see subsection~\ref{subsec:gphings} for precise definition).

\subsection{Amenability}

Let $G$ be a locally finite connected graph, and let $p_n(x,y)$ denote the probability of the simple random walk started from $x$ reaching $y$ in $n$ steps.
Then the value $\limsup_{n \to \infty} \sqrt[n]{p_n(x,y)}$ is independent of the choice of $x$ and $y$,
and is in fact equal to the norm of the Markov operator $M: \ell^2(V(G), m_{\textrm{st}}) \to \ell^2(V(G), m_{\textrm{st}})$. Here $m_{\textrm{st}}$ is the degree-biased version of the counting measure, i.e.\ $m_{\textrm{st}}(X) = \sum_{v \in X} \deg(v)$, which is a stationary measure with respect to the random walk.
The operator $M$ is defined by
\[\left(M(f)\right)(v)= \frac{1}{\deg(v)}\sum_{uv \in E(G)} f(u).\]
$M$ is self-adjoint and has norm at most 1 for any $G$. We will denote its norm (and spectral radius) by $\rho$:
\[\rho=||M||=\limsup_{n \to \infty} \sqrt[n]{p_n(x,y)}.\]
We say $G$ is \emph{amenable} if $\rho=1$ and \emph{non-amenable} if $\rho <1$.

\subsection{Schreier graphs}

Given a finitely generated group $\Gamma= \langle S \rangle$ and an action $\Gamma \acts X$ on some set $X$, the \emph{Schreier graph} $\Sch(\Gamma \acts X,S)$ of the action is defined as follows. The set of vertices is $X$, and for every $x\in X$, $s \in S$, we introduce an oriented $s$-labelled edge from $x$ to $s.x$.

Rooted connected Schreier graphs of $\Gamma$ are in one-to-one correspondence with pointed transitive actions of $\Gamma$, which in turn are in one-to-one correspondence with subgroups of $\Gamma$.
Trivially, a graph with a Schreier decoration is a Schreier graph of the free group $F_d$ on $d$ generators.
A special case is the
(left) \emph{Cayley graph} of $\Gamma$, denoted $\Cay(\Gamma,S)$, which is the Schreier graph of the (left) translation action $\Gamma \acts \Gamma$.

\subsection{Factors of iid} \label{subsec:fiid}
Let $\Gamma$ be a group. A \emph{$\Gamma$-space} is a measurable space $X$ with an action $\Gamma \acts X$. A map $\Phi: X \to Y$ between two $\Gamma$-spaces is a \emph{$\Gamma$-factor} if it is measurable and $\Gamma$-equivariant, that is $\gamma.\Phi(x)=\Phi(\gamma.x)$ for every $\gamma \in \Gamma$ and $x \in X$.

A measure $\mu$ on a $\Gamma$-space $X$ is \emph{invariant} if $\mu(\gamma.A)=\mu(A)$ for all $\gamma\in\Gamma$ and all measurable $A \subseteq X$. We say an action $\Gamma \acts (X,\mu)$ is \emph{probability-measure-preserving} (p.m.p.) if $\mu$ is a $\Gamma$-invariant probability measure. 

Let $G$ be a graph and $\Gamma \leq \Aut(G)$. Let ${\tt u}$ denote the Lebesgue measure on $[0,1]$. We endow the space $[0,1]^{V(G)}$ with the product measure ${\tt u}^{V(G)}$. The translation action $ \Gamma \acts [0,1]^{V(G)}$ is defined by \[(\gamma.f)(v) = f(\gamma^{-1}.v), \ \forall \gamma \in \Gamma, v\in V(G).\]
The action $\Gamma \acts ([0,1]^{V(G)}, {\tt u}^{V(G)})$ is p.m.p.

An orientation of $G$ can be thought of as a function on $E(G)$ sending every edge to one of its endpoints. Viewed like this, orientations of $G$ form a measurable function space ${\tt Or}(G)$ on which $\Gamma$ acts. The set ${\tt BalOr}(G) \subseteq {\tt Or}(G)$ of balanced orientations is $\Gamma$-invariant and measurable, so it is a $\Gamma$-space in itself. Similarly, the set of all Schreier decorations of $G$ forms the $\Gamma$-space ${\tt Sch}(G)$. 

\begin{defi}
A $\Gamma$-factor of iid balanced orientation (respectively, Schreier decoration) of a graph $G$ is a $\Gamma$-factor $\Phi: ([0,1]^{V(G)}, {\tt u}^{V(G)}) \to {\tt BalOr}(G)$ (respectively, to ${\tt Sch}(G)$). If the subgroup $\Gamma \leq \Aut(G)$ is not specified, we mean an $\Aut(G)$-factor.
\end{defi}

\begin{remark}
We allow $\Phi$ to not be defined on a ${\tt u}^{V(G)}$-null subset $X_0 \subseteq [0,1]^{V(G)}$. 
\end{remark}

Let us now recall some special classes of iid processes on graphs. For a fixed vertex $x \in V(G)$, let $\big(\Phi(\omega)\big)(x)$ denote the restriction of $\Phi(\omega)$ to the edges incident to $x$. We say $\Phi$ is a \emph{finitary} factor of iid if for almost all $\omega\in [0,1]^{V(G)}$, there exists an $R \in \mathbb{N}$ such that $\big(\Phi(\omega)\big)(x)$ is already determined by $\omega|_{B_G(x, R)}$. That is, if we change $\omega$ outside $B_G(x, R)$, the decoration $\Phi(\omega)$ does not change around $x$. This radius $R$ can depend on the particular $\omega$. If it does not then we say $\Phi$ is a \textit{block factor}.

When constructing factors of iid algorithmically, one often makes use of the fact that a uniform $[0,1]$ random variable can be decomposed into countably many independent uniform $[0, 1]$ random variables. In practice, this means that we can assume that a vertex has multiple labels or that a new independent random label is always available after a previous one was used.

We will use a reverse operation as well: the
composition of countably many uniform $[0,1]$ random variables is again a uniform $[0,1]$ random variable. 

\begin{remark} \label{rmk:LLL}
Note that balanced orientations of $T_{2d}$ have the property that fixing the orientation on all edges at distance $r$ from some vertex $u$ determines the orientation of edges incident to $u$, independently of $r$. Consequently, the balanced orientation constructed in Theorem~\ref{thm:non-amenable_balanced_orientation} has no local reduction to the Lovász Local Lemma (LLL). Indeed, by \cite[Section 11.1]{balliu2020classification} it has randomized local complexity $\Theta(\log n)$, whereas the algorithm of \cite{fischer2017sublogarithmic} implies $o(\log n)$ complexity for problems that have local reductions to the LLL. So although there are measurable versions of the LLL \cite{kun2013expanders, bernshteyn2019measurable}, factor-of-iid balanced orientations of $T_{2d}$ cannot be obtained that way.
\end{remark}

\subsection{Unimodular quasi-transitive graphs} \label{subsec:unimod_qt_graphs}
Unimodular random rooted graphs are central objects in sparse graph limit theory because they can represent limits of locally convergent sequences of finite graphs. In this paper, however, we only deal with a special case, namely unimodular quasi-transitive graphs.
For a thorough treatment of the topic and the connection to unimodular random rooted graphs, we refer the reader to \cite[Chapter 8.2]{lyons2017probability} and \cite{AldousLyons}.

Let $G$ be a locally finite graph, $\Gamma = \Aut(G)$. There is a function $\mu:V(G) \to \R^+$ such that for any $x,y \in V(G)$, we have \[\frac{\mu(x)}{\mu(y)} = \frac{|\Stab_{\Gamma}(x).y|}{|\Stab_{\Gamma}(y).x|}.\]

The function $\mu$ is unique up to multiplication by a constant. We say $G$ is \emph{unimodular} if $|\Stab_{\Gamma}(x).y| = |\Stab_{\Gamma}(y).x|$ for any pair $x,y \in V(G)$ that are in the same $\Gamma$ orbit, that is $y \in \Gamma.x$. So $G$ is unimodular if and only if $\mu(y)=\mu(x)$ for any $y \in \Gamma.x$.

Moreover, when $\{o_i\}$ is the orbit section of $G$ and $\sum_i\mu(o_i)^{-1}<\infty$, then we can normalize $\mu$ to obtain a probability measure on $\{o_i\}$.

In particular, when $G$ is quasi-transitive, let $T=\{o_1,\dots,o_t\}\subset V(G)$ be a set of representatives of the orbits of $\Gamma \acts V(G)$. Let $p$ be the normalized version of $\mu^{-1}$ as above -- we think of $p$ as a distribution of a random root in $G$. 

The notion of unimodularity comes hand in hand with the Mass Transport Principle. In our case, it takes the following form:

\begin{proposition}[Mass Transport Principle, Corollary 8.11. in \cite{lyons2017probability}] \label{prop:mass_transport}
Given a function $f: V(G) \times V(G) \to [0,\infty]$ that is invariant under the diagonal action of $\Gamma$, we have 

\[\sum_{i=1}^tp(o_i) \sum_{z \in V(G)} f(o_i,z) = \sum_{i=1}^tp(o_i) \sum_{z \in V(G)} f(z,o_i).\]
\end{proposition}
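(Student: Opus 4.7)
The plan is to derive the identity from diagonal $\Gamma$-invariance combined with orbit-stabiliser counting, organised by pairs of $\Gamma$-orbits. First I would swap $i \leftrightarrow j$ on the right-hand side to rewrite it as $\sum_{i,j} p(o_j)\sum_{x\in O_i} f(x,o_j)$, where $O_i = \Gamma.o_i$, and split the left-hand side analogously as $\sum_{i,j} p(o_i)\sum_{y \in O_j} f(o_i,y)$; these rearrangements of non-negative sums are justified by Tonelli and the finiteness of $t$. It then suffices to prove the per-orbit-pair identity
\[
p(o_i)\sum_{y\in O_j}f(o_i,y) \;=\; p(o_j)\sum_{x\in O_i}f(x,o_j)
\]
for every $i,j\in\{1,\dots,t\}$. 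Since $p \propto \mu^{-1}$ and $\mu$ is constant on each orbit (the unimodularity assumption), this is in turn equivalent to $\mu(o_j)\sum_{y\in O_j}f(o_i,y) = \mu(o_i)\sum_{x\in O_i}f(x,o_j)$.

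Next I would partition $O_i \times O_j$ into its diagonal $\Gamma$-orbits. For each such orbit $\mathcal{O}$ the slices $\{y\in O_j : (o_i,y)\in \mathcal{O}\}$ and $\{x\in O_i : (x,o_j)\in \mathcal{O}\}$ are both non-empty, because applying a suitable element of $\Gamma$ to any member of $\mathcal{O}$ places $o_i$ in the first coordinate, and similarly for $o_j$. The first slice is exactly the $\Stab_\Gamma(o_i)$-orbit of any $y_0$ with $(o_i,y_0)\in \mathcal{O}$, hence has size $|\Stab_\Gamma(o_i).y_0|$; the second has size $|\Stab_\Gamma(o_j).x_0|$ for any $x_0$ with $(x_0,o_j)\in \mathcal{O}$. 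Both are finite because automorphisms fixing a vertex preserve distances from it, so stabiliser-orbits lie in finite spheres. Since $f$ is constant on $\mathcal{O}$ by diagonal invariance, the contribution of $\mathcal{O}$ to the two sides of the target identity is, respectively,
\[
\mu(o_j)\cdot|\Stab_\Gamma(o_i).y_0|\cdot f(o_i,y_0) \qquad\text{and}\qquad \mu(o_i)\cdot|\Stab_\Gamma(o_j).x_0|\cdot f(x_0,o_j),
\]
and invariance gives $f(o_i,y_0)=f(x_0,o_j)$ for any $\gamma\in\Gamma$ with $\gamma(o_i,y_0)=(x_0,o_j)$.

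The two numerical factors agree thanks to unimodularity. Conjugating by $\gamma$ yields $\gamma\,\Stab_\Gamma(y_0)\,\gamma^{-1}=\Stab_\Gamma(o_j)$, which upon applying $\gamma$ to the orbit $\Stab_\Gamma(y_0).o_i$ gives $|\Stab_\Gamma(y_0).o_i|=|\Stab_\Gamma(o_j).x_0|$. Combined with the defining relation $\mu(o_i)/\mu(y_0) = |\Stab_\Gamma(o_i).y_0|/|\Stab_\Gamma(y_0).o_i|$ and the identity $\mu(y_0)=\mu(o_j)$ coming from unimodularity, this yields $\mu(o_j)|\Stab_\Gamma(o_i).y_0| = \mu(o_i)|\Stab_\Gamma(o_j).x_0|$, which is exactly what is required. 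Summing over the diagonal $\Gamma$-orbits inside $O_i\times O_j$ establishes the pairwise identity, and summing over $i,j$ produces the full mass transport identity.

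I expect the main difficulty to be organisational rather than conceptual: one must carefully track how the two slicings of the same diagonal orbit --- first coordinate fixed to $o_i$ versus second coordinate fixed to $o_j$ --- are linked by precisely the unimodular identity, and verify that all stabiliser-orbits appearing are finite. The non-negativity of $f$ eliminates any convergence concern, so the heart of the argument is the combinatorial bookkeeping at the level of a single diagonal orbit $\mathcal{O}$.
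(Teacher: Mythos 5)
Your proof is correct, and it is essentially the standard proof of the quasi-transitive Mass Transport Principle as it appears in the literature (the paper itself does not prove this proposition but cites it directly from Lyons--Peres, Corollary~8.11). The orbit-pair reduction, the decomposition of $O_i\times O_j$ into diagonal $\Gamma$-orbits, the identification of the two slices with $\Stab_\Gamma(o_i)$- and $\Stab_\Gamma(o_j)$-orbits, and the conjugation identity $|\Stab_\Gamma(y_0).o_i|=|\Stab_\Gamma(o_j).x_0|$ combined with the defining cocycle relation for $\mu$ and its orbit-constancy under unimodularity are exactly the right ingredients, and they fit together without gaps: each diagonal orbit contributes equally to the two sides of the per-pair identity, Tonelli handles the $[0,\infty]$-valued sums, and local finiteness guarantees the stabiliser-orbits are finite.
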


We immediately use the Mass Transport Principle to set up a finite state Markov chain mimicking the transitions of the random walk on $G$ between $\Gamma$-orbits.

\begin{lemma} \label{lemma:unimod_is_reversible}
For any $1\le i\neq j \le t$, the function $p$ satisfies, 
\[
    p(o_i) \left|\{o_iv\in E~|~ v \in \Gamma.o_j\}\right| = p(o_j)\left|\{vo_j\in E~|~ v \in \Gamma.o_i)\}\right|.
\]
\end{lemma}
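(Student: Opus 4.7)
The plan is to apply the Mass Transport Principle (Proposition~\ref{prop:mass_transport}) to an appropriately chosen indicator function that counts edges between specified orbits.

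Concretely, I would define $f: V(G) \times V(G) \to [0,\infty]$ by
\[
    f(x,y) = \1\bigl[xy \in E(G)\bigr] \cdot \1\bigl[x \in \Gamma.o_i\bigr] \cdot \1\bigl[y \in \Gamma.o_j\bigr].
\]
The key point to verify first is that $f$ is invariant under the diagonal action of $\Gamma$. This is immediate because each factor is $\Gamma$-invariant under the diagonal action: automorphisms preserve the edge set $E(G)$ and permute each orbit $\Gamma.o_i$ within itself.

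Having established invariance, I would plug $f$ into the Mass Transport Principle and analyze each side. On the left-hand side $\sum_k p(o_k) \sum_{z \in V(G)} f(o_k, z)$, the factor $\1[o_k \in \Gamma.o_i]$ kills every term except $k=i$ (since distinct orbit representatives lie in distinct orbits), and the remaining sum over $z$ counts exactly $|\{o_i v \in E : v \in \Gamma.o_j\}|$. Similarly, on the right-hand side $\sum_k p(o_k) \sum_{z \in V(G)} f(z, o_k)$, only $k=j$ contributes, producing $p(o_j) \cdot |\{vo_j \in E : v \in \Gamma.o_i\}|$. Equating the two sides yields the claim.

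The step most worth double-checking is the assumption $i \neq j$: when $i=j$, a loop at $o_i$ (if the graph allowed loops) or an edge within $\Gamma.o_i$ would be counted differently on the two sides and the identity would require a factor of 2 correction for edges within one orbit. Since the statement explicitly excludes $i = j$, no such adjustment is needed and the transport argument goes through cleanly. There is no real obstacle here — the content of the lemma is simply to package the definition of unimodularity/$p$ with MTP into a convenient detailed-balance-type identity that will later drive the finite-state Markov chain on orbits.
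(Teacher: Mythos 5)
Your proof is correct and takes exactly the same approach as the paper: both define the mass transport function $f(x,y)$ to be the indicator that $xy$ is an edge with $x \in \Gamma.o_i$ and $y \in \Gamma.o_j$, then apply the Mass Transport Principle. Your write-up is somewhat more detailed (spelling out the $\Gamma$-invariance check and the role of $i \neq j$), but the argument is identical.
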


\begin{proof}
For fixed $i\neq j$, set up a payment function $f$ with $f(x,y) = 1$ if $xy \in E(G)$, $x \in \Gamma.o_i$ and $y \in \Gamma.o_j$. Set $f(x,y)=0$ otherwise. The Mass Transport Principle gives the desired equality. 
\end{proof}

We define a Markov chain $M_T$ with states $T$ and transition probabilities \[p_{M_T}(o_i, o_j)=\frac{ \left|\{o_iv\in E(G)~|~ v \in \Gamma.o_j\}\right| }{\deg(o_i)}.\]

With slight abuse of notation, we will also denote the transition matrix by $M_T$. We write $\widetilde{p}$ for the degree-biased version of the root distribution $p$, that is

\[\widetilde{p} (o_i) = \frac{\deg(o_i)}{\Delta} \cdot p(o_i).\]

Here $\Delta = \mathbb{E}_{p}[\deg(o_i)]$ is the expected degree of a root picked with distribution $p$. Lemma~\ref{lemma:unimod_is_reversible} shows that $\widetilde{p}$ is a reversible stationary distribution for $M_T$.

List the eigenvalues of $M_T$ in decreasing order, $1=\lambda_1 \geq \lambda_2 \geq \ldots \geq \lambda_t$. We say $M_T$ is \emph{bipartite} if $\lambda_t=-1$.
$M_T$ is bipartite if and only if we can partition $T$ into two sets $T_1$ and $T_2$ such that whenever $o_i, o_j \in T_1$ or $o_i, o_j \in T_2$, we have $p_{M_T}(o_i,o_j)=0$. We set $\rho_T = \max(\{0\}\cup \{|\lambda_i| ~|~ 1<i\le t ~\textrm{ and }~\lambda_i>-1\})$.

We will have to treat the bipartite and non-bipartite case separately. When $M_T$ is not bipartite, we have $\rho_T = 0$ when $t=1$ and $\rho_T = \max\{\lambda_2,|\lambda_t|\}$ when $t\geq2$. The following is an immediate consequence of the Convergence Theorem for finite-state Markov chains.

\begin{lemma}\label{lemma:markov_chain_convergence}
Assume $M_T$ is not bipartite. Let $e_{o_i} \in \R^T$ denote the characteristic vector of $o_i\in T$. Then for any $v\in \mathbb{R}^T$, there exists a $C>0$ such that for any $i\in[t]$ and $k \in \N$ we have
\[
    |\langle M_T^k e_{o_i},  v \rangle - \langle \widetilde{p}, v\rangle|\le C\rho^k_T.
\]
\end{lemma}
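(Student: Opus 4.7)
The plan is to use the standard spectral-gap argument for finite irreducible reversible Markov chains. By Lemma~\ref{lemma:unimod_is_reversible}, $\widetilde{p}$ is a reversible stationary distribution for $M_T$, which means $M_T$ is similar via $D = \mathrm{diag}(\sqrt{\widetilde{p}})$ to a real symmetric matrix. Hence $M_T$ has a full eigenbasis with real spectrum $1 = \lambda_1 \ge \lambda_2 \ge \cdots \ge \lambda_t$, and the non-bipartite hypothesis forces $\lambda_t > -1$, so every $|\lambda_j|$ with $j \ge 2$ is at most $\rho_T < 1$.

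Next I would fix an eigenbasis $w_1, w_2, \ldots, w_t$ of $M_T$ with $w_1 = \widetilde{p}$ (the right eigenvector for eigenvalue $1$) and expand $e_{o_i} = \sum_{j=1}^t c_j^{(i)} w_j$. The coefficient $c_1^{(i)}$ equals $1$: pair the expansion in the standard inner product with the constant vector $\mathbf{1}$, which is the left eigenvector of $M_T$ for the simple eigenvalue $1$ and is therefore orthogonal to every $w_j$ with $j \ge 2$; this yields $1 = \langle e_{o_i}, \mathbf{1}\rangle = c_1^{(i)} \langle \widetilde{p}, \mathbf{1}\rangle = c_1^{(i)}$. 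Applying $M_T^k$ then gives
\[
    M_T^k e_{o_i} - \widetilde{p} \;=\; \sum_{j \ge 2} c_j^{(i)}\, \lambda_j^k\, w_j ,
\]
so that pairing with $v$ bounds the left-hand side by $\rho_T^k \sum_{j \ge 2} |c_j^{(i)}|\, |\langle w_j, v\rangle|$. A Cauchy--Schwarz estimate in the Hilbert space $\ell^2(T, 1/\widetilde{p})$, in which the $w_j$ are orthogonal and $M_T$ is self-adjoint, bounds this sum uniformly in $i$ by a quantity depending only on $\|v\|_{\ell^2(T,\widetilde{p})}$ and on $\min_l \widetilde{p}(o_l) > 0$, which produces the desired constant $C$.

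The hardest step here is not the spectral argument itself --- it is a textbook calculation --- but the bookkeeping with matrix conventions. $M_T$ was introduced as a right-stochastic transition matrix (so $M_T \mathbf{1} = \mathbf{1}$), whereas in the lemma $M_T^k e_{o_i}$ is used to denote the time-$k$ distribution started at $o_i$, which forces $\widetilde{p}$ (rather than $\mathbf{1}$) to play the role of the top eigenvector in the expansion. Once that switch of eigenvectors is made and the normalization $c_1^{(i)} = 1$ is verified, the geometric rate $\rho_T^k$ falls out automatically from the spectral bound on the orthogonal complement.
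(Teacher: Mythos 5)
Your argument is correct; it supplies the proof that the paper deliberately omits, since the paper simply invokes ``the Convergence Theorem for finite-state Markov chains.'' What you wrote is the standard spectral derivation of that theorem for a reversible, irreducible finite chain: $M_T$ is self-adjoint on the $\widetilde p$-weighted space, it has a real eigenbasis, the top eigenvalue $1$ is simple with eigenvector $\widetilde p$ in the convention forced by the lemma, and the non-bipartite hypothesis is exactly what makes every other eigenvalue bounded in modulus by $\rho_T<1$ (since the definition of $\rho_T$ excludes a possible eigenvalue $-1$). Your normalization $c_1^{(i)}=1$ via pairing with $\mathbf 1$ is the clean way to see that the $\widetilde p$-component survives.

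Two small remarks. First, your careful observation about conventions is warranted: the paper introduces $M_T$ as a transition kernel and does not say whether the matrix acts by rows or columns, but the way Lemma~\ref{lemma:markov_chain_convergence} is used in the proof of Theorem~\ref{thm:non_bipart_spectral_gap} (namely $\langle M_T^k e_{o_i},v\rangle = \sum_j p_k^{M_T}(o_i,o_j)\,v(o_j)$) pins it down exactly as you read it, with $\widetilde p$ as the relevant right eigenvector. Second, the Cauchy--Schwarz step in $\ell^2(T,1/\widetilde p)$ to get uniformity in $i$ is heavier than needed: $i$ ranges over the finite set $[t]$, so one may simply take $C$ to be the maximum over $i$ of the $t$ finite sums $\sum_{j\ge 2}|c_j^{(i)}|\,|\langle w_j,v\rangle|$. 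The weighted-space estimate is correct and instructive, but the finite-dimensionality already gives the uniform constant for free.
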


When $M_T$ is bipartite, we have $\rho_T = 0$ when $t=2$, and as the spectrum is symmetric, in fact $\lambda_{t-1}=-\lambda_2$, so $\rho_T =\lambda_2$ whenever $t\geq3$.
The reversibility ensures $\widetilde{p}(T_1) = \widetilde{p}(T_2)=1/2$, and $M^2_T$ defines two disjoint Markov chains on $T_1$ and $T_2$ with stationary measures $2\widetilde{p}|_{T_1}$ and $2\widetilde{p}|_{T_2}$ respectively. The eigenvalues of $M^2_T$ are the squares of the eigenvalues of $M_T$, in particular they are non-negative, so  $M^2_T$ is not bipartite on either $T_1$ or $T_2$. Also the second largest eigenvalue in absolute value of $M_T^2$ (on both $T_1$ and $T_2$) is $\rho^2_T$.

\begin{remark}
The key to our proofs is not quasi-transitivity itself, but rather the exponential rate of convergence to the stationary distribution of $M_T$ as given by Lemma \ref{lemma:markov_chain_convergence}. So all our results hold more generally whenever the Markov chain on the orbit section is uniformly ergodic.
\end{remark}

\subsection{Graphings}\label{subsec:gphings}
Graphings play an essential role in obtaining invariant random structures on graphs as they represent a space where both the probability measure and the underlying (possibly random) countable graph are present. Their use in constructing factor-of-iid perfect matchings is well-established in \cite{LyonsNazarov} and \cite{csoka2017invariant}. For a more detailed introduction, see for example \cite[Chapter 18]{lovasz2012large}.

\begin{defi}\label{def:graphing}
Let $(X, \nu)$ be a Borel probability space.
A (bounded-degree) \emph{graphing} is a graph $\mathcal{G}$ with $V(\mathcal{G}) = X$ and Borel edge set $E(\mathcal{G})$, in which all degrees are at most $D \in \N$, and 
\begin{equation} \label{eqn:graphing}
\int_{A} \deg_B (x) \ d \nu (x) = \int_{B} \deg_A(x) \ d \nu (x)
\end{equation}
for all measurable sets $A,B \subseteq X$, where $\deg_S(x)$ is the number of edges from $x \in X$ to $S \subseteq X$.
\end{defi}

We will now define what we mean by $E(\mathcal{G})$ being Borel. The reason we do it in a slightly convoluted way is because in this paper it will be more convenient to use $E(\mathcal{G})$ to denote the set of edges, and not think about it as a symmetric subset of $X \times X$. The present downside to this is that defining the Borel structure and the edge measure can be done most naturally inside $X \times X$. For this reason let $\tilde{E}(\mathcal{G})$ denote the symmetric subset of $X \times X$ corresponding to the edges of $\mathcal{G}$:

\[\tilde{E}(\mathcal{G})=\{(x,y) \in X \times X ~|~ xy \in E(\mathcal{G})\}.\]

We say $E(\mathcal{G})$ is a Borel edge set if $\tilde{E}(\mathcal{G}) \subseteq X \times X$ is Borel. Then $E(\mathcal{G})$ itself has a Borel $\sigma$-algebra corresponding to the sub-$\sigma$-algebra of symmetric Borel subsets of $\tilde{E}(\mathcal{G})$. 

Moreover, the measure $\nu$ of a graphing $\mathcal{G}$ gives rise to a measure $\nu_{\tilde{E}}$ on $X\times X$ by defining
\[\nu_{\tilde{E}}(A\times B)=\frac{1}{2}\int_{A}\deg_B(x)\ d \nu(x)\]
for any measurable $A,B\subset X$. The measure $\nu_{\tilde{E}}$ is concentrated on $\tilde{E}(\mathcal{G})$. 

We then define the \emph{edge measure} $\nu_E$ on $E(\mathcal{G})$ by setting $\nu_E(F) = \nu_{\tilde{E}}(\tilde{F})$ for measurable subsets $F \subseteq E(\mathcal{G})$. ($\tilde{F}$ is defined analogously to $\tilde{E}(\mathcal{G})$.) Essentially we are restricting $\nu_{\tilde{E}}$ as defined above to the symmetric Borel subsets of $\tilde{E}(\mathcal{G})$. 
The factor $1/2$ is introduced so that the appropriate version of the usual edge double counting identity $2|E(G)|=\sum_{v\in V(G)} \deg(v)$ for finite graphs also holds for graphings:
\[2\nu_E\big(E(\mathcal{G})\big)=\int_{X}\deg(x)\ d \nu(x).\]

\begin{example} \label{exmpl:sch_graphing}
Given a finitely generated group $\Gamma=\langle S\rangle$ and a p.m.p.\ action $\Gamma \acts (X,\nu)$, the Schreier graph $\Sch(\Gamma \acts X, S)$ is a graphing (after forgetting the orientation and $S$-labelling). The action being p.m.p.\ implies that the degree condition (\ref{eqn:graphing}) holds.
\end{example}

\subsection{Connection to Bernoulli graphings}\label{subsec:bernoulli_graphing}
We now introduce Bernoulli graphings, which are closely related to factors of iid.

For a unimodular quasi-transitive graph $G$, we define its Bernoulli graphing $\mathcal G$ as follows. The vertex set of $\mathcal G$ is $\Omega$, the space of $[0,1]$-decorated, rooted, connected graphs with degree bound $D$ (up to rooted isomorphism). Elements of $V(\mathcal G)=\Omega$ are of the form $(H,u,\omega)$, where $(H,u)$ is a connected, bounded-degree rooted graph, and $\omega:V(H) \to [0,1]$ is a labelling. We connect $(H,u,\omega)$ with $(H',u',\omega')$ if and only if we can obtain $(H',u',\omega')$ from $(H,u,\omega)$ by moving the root $u$ to one of its neighbors. We denote the resulting measurable edge set by $\mathcal E$.

It remains to define the probability measure on $\Omega$. (Note that the vertex and edge sets are the same for every $G$, only the measure will be different.) $G$ is quasi-transitive, so it has finitely many possible rooted versions, namely the $(G,o_i)$ for $o_i \in \{o_1, \ldots, o_t\}$. Let us pick the rooted graph $(G,o_i)$ with probability $p(o_i)$. 
Then pick a labelling $\omega\in[0,1]^{V(G)}$ according to ${\tt u}^{V(G)}$. Recall that ${\tt u}$ stands for the uniform measure on $[0,1]$. Let $\nu_{G}$ denote the distribution of $(G,o,\omega)$. Then $\nu_G$ is a probability measure on $\Omega$. The Bernoulli graphing of $G$ is $\mathcal G=(\Omega, \mathcal E, \nu_{G})$. $\mathcal G$ satisfies (\ref{eqn:graphing}) because $G$ is unimodular.

Given a unimodular quasi-transitive graph $G$, constructing an $\Aut(G)$-factor of iid balanced orientation (Schreier decoration) of $G$ is equivalent to constructing a measurable balanced orientation (Schreier decoration) of the Bernoulli graphing $\mathcal{G}$ built on $G$. Here, measurability means that the oriented edges (and the color classes) form measurable subsets of $X \times X$.

Also note that a measurable Schreier decoration of any graphing $\mathcal{G}$ defines a p.m.p.\ action $F_d \acts V(\mathcal{G})$ that \emph{generates} the graphing as in Example~\ref{exmpl:sch_graphing}.

Therefore, an equivalent formulation of our main motivating question is the following: given a (quasi-transitive unimodular) $2d$-regular graph $G$, is the Bernoulli graphing $\mathcal{G}$ generated by a p.m.p.\ action of $F_d$? It would of course be even better to answer this question for all unimodular random rooted graphs.

\subsection{Perfect matchings in expanding graphings}
Finding measurable perfect matchings in graphings is usually achieved through expansion properties. We will use the following two results, both based on the argument of Lyons and Nazarov in \cite{LyonsNazarov}.

\begin{theorem}[Lyons-Nazarov, \cite{LyonsNazarov}] \label{thm:graphing_perfect_matching}
Let $\mathcal{G}=(X, E, \nu)$ be a graphing with no odd cycles. Assume it has vertex expansion at least $c > 1$. That is, for any $A \subset X$ such that $0<\nu(A) \le 1/2$, we have
\[ \frac{\nu(N_{\mathcal{G}}(A))}{\nu(A)} \ge c.\]
Then $\mathcal{G}$ has a Borel matching that covers all vertices up to a nullset.
\end{theorem}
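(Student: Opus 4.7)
The plan is to build the matching as a monotone union of Borel partial matchings $M_0 \subseteq M_1 \subseteq \cdots$, where each $M_{n+1}$ is obtained from $M_n$ by flipping a Borel family of vertex-disjoint $M_n$-augmenting paths, until the uncovered sets $U_n$ satisfy $\nu(U_n) \to 0$. The union $M := \bigcup_n M_n$ is then the desired Borel matching covering $X$ up to a null set. The no-odd-cycles hypothesis puts us in the bipartite regime --- each component of $\mathcal{G}$ is bipartite, so alternating paths alternate between the two bipartition classes and every augmenting path has odd length --- allowing the classical bipartite augmenting-path machinery to work without Edmonds-style blossom contractions.

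The crucial step is to show, given $M_n$ with $\nu(U_n) > 0$, that short Borel augmenting paths exist whose starting vertices cover at least a fraction $\beta(c) > 0$ of $U_n$ in measure. I would run an $M_n$-alternating BFS from $U_n$, building nested Borel layers $R_{2k}$ of vertices reachable in at most $2k$ alternating steps (starting with an unmatched edge). The matching $M_n$ gives a measure-preserving Borel involution on its matched part, so combined with the expansion inequality $\nu(N_{\mathcal G}(A)) \geq c\cdot\nu(A)$ (for Borel $A$ with $\nu(A) \leq 1/2$), the measures of the accumulated layers grow by a factor at least $c$ per pair of BFS steps until an augmenting path is produced. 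Since the total measure is bounded by $1$, within depth $k_0 = O(\log_c(1/\nu(U_n)))$ the BFS must hit $U_n$ on the opposite bipartition side, yielding a Borel family of augmenting paths of length at most $2k_0+1$ whose endpoints overlap $U_n$ in a set of measure comparable to $\nu(U_n)$.

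Having located this Borel set of short augmenting paths, I would extract a vertex-disjoint Borel subfamily via greedy selection along a finite Borel proper coloring of their conflict hypergraph (whose maximum degree is at most $D^{O(k_0)}$, where $D$ bounds the degree of $\mathcal{G}$), using standard Kechris--Solecki--Todor\v{c}evi\'c-type Borel colorings. Flipping the packing produces $M_{n+1}$ with $\nu(U_{n+1}) \leq \nu(U_n)(1 - \delta_n)$ for an explicit $\delta_n > 0$, and the divergence of $\sum_n \delta_n$ forces $\nu(U_n) \to 0$. The main obstacle is the quantitative balance between expansion and Borel packing: expansion delivers short augmenting paths with positive density, but a vertex-disjoint Borel packing costs a factor exponential in path length, so one must carefully control the depth $k_0$ against the resulting $\delta_n$ to ensure overall decay. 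Measurability of the intermediate steps --- alternating BFS, conflict-graph coloring, symmetric-difference update --- is standard in the Borel combinatorics of bounded-degree graphings and should present no essential difficulty beyond the usual book-keeping.
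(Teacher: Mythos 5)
This theorem is not proved in the paper; it is quoted from Lyons--Nazarov \cite{LyonsNazarov} without proof, so there is no internal argument to compare against. Your sketch does, however, recover the broad shape of the Lyons--Nazarov argument: alternating BFS from the uncovered set, expansion forcing augmenting paths of length $O(\log_c(1/\nu(U_n)))$, a Borel packing of vertex-disjoint augmenting paths extracted via bounded-degree Borel coloring, and iteration driving $\nu(U_n)\to 0$ because the per-round gain $\delta_n$, though it decays with $\nu(U_n)$, still has divergent sum.

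One genuine slip: the claim that the matchings form a \emph{monotone} chain $M_0\subseteq M_1\subseteq\cdots$ and that $M=\bigcup_n M_n$ is the final matching is wrong. Augmenting along a path removes the $M_n$-edges on the path and inserts the non-$M_n$-edges, so $M_{n+1}\not\supseteq M_n$; the raw union is not a matching, and $\liminf_n M_n$ alone need not cover a.e.\ vertex. What is monotone is the set of \emph{matched vertices}, and what one actually needs is that the sequence $(M_n)$ converges in measure, i.e.\ that $\sum_n \nu_E(M_n\triangle M_{n+1})<\infty$, so that a Borel limit matching exists. This requires controlling $\nu_E(M_n\triangle M_{n+1})\le L_n\cdot(\text{measure of }U_n\text{ matched in round }n)$ against the increasing path length $L_n$; with your estimates $L_n\sim\log(1/u_n)$, $u_n\sim n^{-1/\beta}$ this sum does converge, but the step should be stated rather than glossed over by ``monotone union.'' The other point you flag yourself --- that the conflict-graph packing costs a factor $D^{O(L_n)}$, which is polynomial in $1/\nu(U_n)$, so one must check that the resulting recursion for $u_n$ still tends to $0$ --- is indeed the quantitative crux of the Lyons--Nazarov proof, and a complete write-up would have to carry it out explicitly rather than asserting it.
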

We will also need a  variation for measurably bipartite graphings.

\begin{theorem}[Lyons-Nazarov, Theorem 9.1 in \cite{pikhurko2020borel}] \label{thm:bipartite_graphing_perfect_matching}
Let $\varepsilon>0$. Let $\mathcal{G}=(X_1, X_2,E, \nu)$ be a measurably bipartite graphing with $\nu(X_1) = \nu(X_2)$. Assume it has bipartite vertex-expansion at least $1+\varepsilon$. That is, for any $A \subseteq X_1$ and $B \subseteq X_2$, we have
\[ \nu(N_{\mathcal{G}}(A)) \ge \min\left\{(1+\varepsilon)\nu(A),\frac{1}{4}+\varepsilon\right\} \qquad\textrm{ and }\qquad  \nu(N_{\mathcal{G}}(B))\ge \min\left\{(1+\varepsilon)\nu(B),\frac{1}{4}+\varepsilon\right\}.\]
Then $\mathcal{G}$ has a Borel matching that covers all vertices up to a nullset.
\end{theorem}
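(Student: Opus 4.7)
The plan is to implement the Lyons--Nazarov augmenting-path argument directly in the measurable category. First, I would obtain a Borel \emph{maximal} matching $M$ in $\mathcal{G}$ by a standard Borel selection on $E(\mathcal{G})$ (for example, an $\omega$-length transfinite exhaustion, at each stage picking a Borel independent edge-set via Luzin--Novikov and adding it to the current matching). Let $U_i \subseteq X_i$ be the Borel set of vertices unmatched by $M$. Because $M$ restricts to a measure-preserving bijection between its matched sides and $\nu(X_1)=\nu(X_2)=\tfrac12$, one has $\nu(U_1) = \nu(U_2)$; call this common value $\delta$, and assume toward contradiction that $\delta > 0$.

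Next I would build the measurable alternating-reachable sets from $U_1$: put $A_0 = U_1$ and iterate
\[ B_{k+1} = N_{\mathcal{G}}(A_k), \qquad A_{k+1} = A_k \cup M^{-1}(B_{k+1}), \]
taking $A^* := \bigcup_k A_k \subseteq X_1$ and $B^* := \bigcup_k B_k \subseteq X_2$; each $A_k$, $B_k$ is Borel. By construction $N_{\mathcal{G}}(A^*) = B^*$, while every point of $B^*$ is $M$-matched into $A^*\setminus U_1$, so the graphing axiom~(\ref{eqn:graphing}) forces
\[ \nu(A^*) = \nu(B^*) + \delta. \]
Symmetrically, from $U_2$ one obtains $A'^* \subseteq X_2$, $B'^* \subseteq X_1$ with $\nu(A'^*) = \nu(B'^*) + \delta$.

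Maximality of $M$ rules out augmenting paths in the measurable category: a point in $A^* \cap B'^*$ (or in $B^* \cap A'^*$) would yield, by concatenating an even alternating path from $U_1$ with an odd alternating path from $U_2$, an $M$-augmenting path between unmatched vertices, which together with a Borel selection of such paths would allow us to enlarge $M$, contradicting maximality. Hence $A^* \cap B'^* = \varnothing$ and $B^* \cap A'^* = \varnothing$, giving
\[ \nu(B^*) + \nu(A'^*) \le \nu(X_2) = \tfrac12. \]
The bipartite expansion hypothesis applied to $A^*$ and $B'^*$ then yields
\[ \nu(B^*) \ge \min\bigl\{(1+\varepsilon)\nu(A^*),\ \tfrac14+\varepsilon\bigr\}, \qquad \nu(A'^*) \ge \min\bigl\{(1+\varepsilon)\nu(B'^*),\ \tfrac14+\varepsilon\bigr\}. \]
If the first minimum is realized by the linear term, then $\nu(B^*) \ge (1+\varepsilon)(\nu(B^*)+\delta)$, forcing $\delta \le 0$, a contradiction. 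Otherwise $\nu(B^*) \ge \tfrac14+\varepsilon$, and symmetrically $\nu(A'^*) \ge \tfrac14+\varepsilon$, contradicting the sum bound $\nu(B^*)+\nu(A'^*) \le \tfrac12$. In either case $\delta = 0$ and $M$ covers almost every vertex.

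The main obstacle I anticipate is not any single step but the bookkeeping of Borel measurability along the alternating BFS, together with the justification of the identity $\nu(A^*) = \nu(B^*) + \delta$: the latter uses the graphing axiom to deduce that the partial involution $M$ pushes $\nu|_{B^*}$ onto $\nu|_{A^*\setminus U_1}$ measure-preservingly, rather than relying on any naive cardinality count. Everything else is a faithful transfer of the classical Hall-type argument, with the $\tfrac14+\varepsilon$ threshold in the expansion hypothesis being exactly what rules out the ``saturated'' regime that has no analogue in the finite setting.
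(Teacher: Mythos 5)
The central step in your argument is a genuine gap, and it is precisely the gap that the theorem is about. You take a ``Borel maximal matching'' $M$ and argue that the existence of an $M$-augmenting path ``together with a Borel selection of such paths would allow us to enlarge $M$, contradicting maximality.'' This does not go through in the Borel/measurable category. A Borel matching produced by an $\omega$-length exhaustion using Luzin--Novikov is maximal only in the weak sense that no single edge can be added; it need not be free of augmenting paths, and the existence of an augmenting path of positive measure does \emph{not} contradict that kind of maximality. Conversely, a Borel matching with literally no augmenting paths is not something you can simply posit: Zorn's lemma is unavailable (unions of chains of Borel sets are not Borel), and there are well-known examples of Borel bipartite graphs satisfying Hall's condition with no Borel perfect matching. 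Moreover, even when augmenting paths exist on a set of positive measure, Borel-selecting a positive-measure vertex-disjoint subfamily requires an a priori bound on their length (so that the ``conflict graph'' of paths has bounded degree and KST-type Borel colorings apply); your argument uses no such bound.

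The actual Lyons--Nazarov argument (and the bipartite version in \cite{pikhurko2020borel}) is structured in the opposite direction: rather than deriving a contradiction from a hypothetical maximal matching, one \emph{constructs} a sequence of matchings $M_0 \subseteq$ (symmetric-difference sense) $M_1, M_2,\dots$ and shows it converges in measure. At each step, the expansion hypothesis is used to prove that any Borel matching missing a set of measure $\delta > 0$ admits augmenting paths of length $O(\log(1/\delta))$ starting from a constant fraction of the unmatched vertices; this bounded length is what makes a Borel selection of a positive-measure vertex-disjoint family of augmenting paths possible. Augmenting along this family decreases the unmatched measure by a definite factor. The geometric decay of $\delta_n$ together with the only-logarithmically-growing path length ensures $\sum_n L_n\delta_n < \infty$, so the matchings form a Cauchy sequence in measure, and the limit is a Borel matching covering a.e.\ vertex. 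Your measure computations (e.g.\ $\nu(A^*) = \nu(B^*) + \delta$ via the graphing axiom, and the final case analysis with the $\tfrac14+\varepsilon$ threshold) are essentially the Hall-type calculation that sits inside the real proof, but the ``take a maximal matching and contradict'' scaffolding around it has no measurable analogue and cannot be made to work as stated.

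A secondary point: in the displayed step you cite ``(\ref{eqn:graphing})'', but what you actually need is that a Borel matching in a graphing, viewed as a partial injection, is measure-preserving on its domain; this is a standard consequence of the graphing axiom but deserves to be stated, since it is used twice (once for $\nu(U_1)=\nu(U_2)$, once for $\nu(A^*)=\nu(B^*)+\delta$).
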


\section{Spectral gap for non-amenable quasi-transitive graphs}
\label{section:spectral_gap}

In this section we prove our spectral theoretic result, Theorem~\ref{thm:spectral_gap}.

For a graphing $\mathcal{G}$ on $(X, \nu)$, we denote by $\nu_{\mathrm{st}}$ the degree-biased version of $\nu$, that is 

\[\nu_{\mathrm{st}}(A) = \int_{x \in A} \deg(x) d \nu \bigg/ \int_{x \in X} \deg(x) d \nu. \]

As the notation suggests, $\nu_{\mathrm{st}}$ is stationary with respect to the Markov operator $\mathcal{M}$ of $\mathcal{G}$ that is defined by
\[
    (\mathcal{M} f)(G,o,\omega)=\frac{1}{\deg_G(o)}\sum_{ov\in E} f(G,v,\omega).
\]

$\mathcal{M}$ is a self-adjoint operator on $L^2(\Omega,\nu_{\mathrm{st}})$. To get a bound on the spectral radius of $\mathcal{M}$ (on the appropriate subspace), we will use the following lemma.

\begin{lemma}[Lemma~2.4 of \cite{backhausz2015ramanujan}]\label{lemma:corsp} For a bounded self-adjoint operator $T$ on a Hilbert space $\mathcal{H}$ and for any spanning subset $H$ of $\mathcal{H}$, we have

\[
    \rho(T)=\|T\|=\sup_{v\in H}\left(\limsup_{k\to\infty}\left|\frac{\langle v,T^k v \rangle}{\langle v,v\rangle}\right|^{1/k}\right).
\]
\end{lemma}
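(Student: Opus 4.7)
The plan is to combine the spectral theorem for bounded self-adjoint operators with the observation that a spanning set cannot be annihilated by a non-zero bounded operator. Write $r(v) := \limsup_{k \to \infty} \left|\langle v, T^k v\rangle / \langle v, v\rangle\right|^{1/k}$ for each $v \in H$. The equality $\rho(T) = \|T\|$ for self-adjoint $T$ is standard (it follows from $\|T^2\| = \|T\|^2$ together with Gelfand's formula). The upper bound $r(v) \leq \|T\|$ is immediate from Cauchy--Schwarz: $|\langle v, T^k v\rangle| \leq \|v\| \cdot \|T^k v\| \leq \|T\|^k \|v\|^2$.

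For the lower bound, set $\lambda^* = \|T\|$ and assume $\lambda^* > 0$ (otherwise the claim is vacuous). For each $\epsilon \in (0, \lambda^*)$, let $P_\epsilon$ denote the spectral projection of $T$ onto the Borel set $\{\lambda \in \sigma(T) : |\lambda| \geq \lambda^* - \epsilon\}$. Since either $\lambda^*$ or $-\lambda^*$ lies in $\sigma(T)$, one has $P_\epsilon \neq 0$. The operator $P_\epsilon$ is bounded, so if it annihilated every $v \in H$ it would annihilate the dense subspace $\mathrm{span}(H)$ and hence be zero; thus some $v_\epsilon \in H$ satisfies $P_\epsilon v_\epsilon \neq 0$.

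Now write $v_\epsilon = P_\epsilon v_\epsilon + (I - P_\epsilon) v_\epsilon$; the two summands are orthogonal, $P_\epsilon$ commutes with $T^{2m}$, and $T^{2m}$ is positive. So for any even $k = 2m$,
\[
\langle v_\epsilon, T^{2m} v_\epsilon\rangle = \langle P_\epsilon v_\epsilon, T^{2m} P_\epsilon v_\epsilon\rangle + \langle (I - P_\epsilon) v_\epsilon, T^{2m} (I - P_\epsilon) v_\epsilon\rangle \geq (\lambda^* - \epsilon)^{2m} \|P_\epsilon v_\epsilon\|^2,
\]
where the final inequality uses the spectral lower bound $T^{2m} \geq (\lambda^* - \epsilon)^{2m} \cdot I$ restricted to $P_\epsilon \mathcal{H}$. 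Taking $(2m)$-th roots and letting $m \to \infty$ gives $r(v_\epsilon) \geq \lambda^* - \epsilon$. Since $\epsilon > 0$ was arbitrary, $\sup_{v \in H} r(v) \geq \lambda^* = \|T\|$.

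The main subtlety, and what I expect to be the only real obstacle, is the step that passes from "$H$ spans" to "some single $v \in H$ witnesses the spectral radius up to $\epsilon$": the supremum in the statement is taken over individual vectors of $H$, not over linear combinations, so one cannot simply approximate an approximate eigenvector of $T$ inside $\mathrm{span}(H)$ and be done. The spectral-projection trick bypasses this issue cleanly, because the boundedness of $P_\epsilon$ forces at least one element of any dense-spanning set to have a non-trivial projection onto the top spectral band, and that single element already realises the lower bound.
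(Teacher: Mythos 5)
Your proof is correct. Note, though, that the paper you were given does not prove this lemma at all --- it cites it verbatim as Lemma~2.4 of Backhausz, Szegedy, and Vir\'ag's \emph{Ramanujan graphings and correlation decay in local algorithms}, so there is no in-paper argument to compare against.

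Compared with the reference proof, yours is a mild rephrasing of the same spectral-theoretic idea rather than a genuinely different route. The standard version works with the scalar spectral measure $\mu_v$ of $v$ (so that $\langle v, T^k v\rangle = \int x^k\, d\mu_v$), identifies $r(v)$ with $\sup\{|x| : x \in \mathrm{supp}\,\mu_v\}$ via the even-moment asymptotics $\bigl(\int x^{2m}\,d\nu\bigr)^{1/2m} \to \|x\|_{L^\infty(\nu)}$ for a probability measure $\nu$, and then uses the spanning hypothesis to show $\bigcup_{v \in H}\mathrm{supp}\,\mu_v$ is dense in $\sigma(T)$. You instead project onto the top spectral band $\{|\lambda| \ge \lambda^*-\epsilon\}$ and argue that the non-zero bounded projection $P_\epsilon$ cannot annihilate every member of a spanning set; the lower bound then comes from the elementary inequality $T^{2m} \ge (\lambda^*-\epsilon)^{2m}I$ on $\mathrm{ran}\,P_\epsilon$, rather than from $L^p$-norm asymptotics. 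Both formulations lean on exactly the same structural facts (the spectral theorem, boundedness of spectral projections, and the non-vanishing of $E(U)$ for open $U$ meeting $\sigma(T)$); your version is arguably a bit more self-contained since it sidesteps the $L^{2m}\to L^\infty$ lemma. You also correctly identified the spanning hypothesis as the one genuine subtlety --- that the supremum ranges over individual vectors of $H$ rather than linear combinations --- and your spectral-projection resolution of it is exactly right.
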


The following two theorems deal with the non-bipartite and bipartite case separately. Recall that we denote the Markov operator of $G$ on $\ell^2(G, m_{\mathrm{st}})$ by $M$, where $m_{\mathrm{st}}$ denotes the degree-biased version of the counting measure on $V(G)$. As $G$ is non-amenable we have $\rho=||M||<1$. Recall also that $\rho_T = \max(\{0\}\cup \{|\lambda_i| ~|~ 1<i\le t ~\textrm{ and }~\lambda_i>-1\})$ is defined through the finite state Markov chain $M_T$, and  $\rho_T<1$.

\begin{theorem} \label{thm:non_bipart_spectral_gap}
Let $G$ be as in Theorem $\ref{thm:spectral_gap}$, and assume also that $M_T$ is not bipartite. Let $L_{0}^2(\Omega,\nu_{\mathrm{st}})$ denote the orthogonal complement of the subspace of constant functions. Then the spectral radius of $\mathcal{M}$ on $L_0^2(\Omega,\nu_{\mathrm{st}})$ is at most $\max\{\rho, \rho_T\}<1$.
\end{theorem}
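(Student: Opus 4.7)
The plan is to apply Lemma~\ref{lemma:corsp} to $\mathcal{M}$ on $L_0^2(\Omega,\nu_{\mathrm{st}})$. As a spanning set I take finite linear combinations of simple cylinder functions
\[
f_{i,F}(G,o,\omega)=\mathbf{1}_{o\in\Gamma.o_i}\cdot F(\omega),
\]
where $i\in\{1,\dots,t\}$ and $F\colon[0,1]^{V(G)}\to\mathbb{R}$ is a bounded $\mathrm{Stab}_\Gamma(o_i)$-invariant function depending only on the labels inside some ball $B_r(o_i)$. A standard approximation argument shows such functions are dense in $L^2(\Omega,\nu_{\mathrm{st}})$, so it is enough to bound $\limsup_k|\langle f,\mathcal{M}^k f\rangle/\|f\|^2|^{1/k}\le\max\{\rho,\rho_T\}$ uniformly for $f$ in this spanning set.

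The key step is to write each $F_i=\bar F_i+F_i^0$ with $\bar F_i=\int F_i\,d\omega$ and $F_i^0$ mean-zero, inducing an orthogonal decomposition $f=f_1+f_2$: here $f_1=\sum_i\bar F_i\mathbf{1}_{o\in\Gamma.o_i}$ depends only on the orbit of the root and is constant in $\omega$, while $f_2$ is orbit-wise mean-zero in $\omega$. Since the labels are iid and $\mathcal{M}$ only moves the root, $\mathcal{M}^k f_2$ remains orbit-wise mean-zero, so the cross terms $\langle f_1,\mathcal{M}^k f_2\rangle$ vanish and the two contributions can be treated separately. The hypothesis $f\in L_0^2$ translates to the single scalar condition $\sum_i\widetilde p(o_i)\bar F_i=0$.

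For $\langle f_1,\mathcal{M}^k f_1\rangle$, iterating gives $(\mathcal{M}^k f_1)(G,o,\omega)=\sum_i\bar F_i(M_T^k)_{[o],i}$, so the inner product reduces to the action of the finite-state chain $M_T$ on the $\widetilde p$-mean-zero vector $(\bar F_i)_i$. Lemma~\ref{lemma:markov_chain_convergence}, which uses precisely the non-bipartite hypothesis on $M_T$, then yields $|\langle f_1,\mathcal{M}^k f_1\rangle|\le C_1\rho_T^k$. For $\langle f_2,\mathcal{M}^k f_2\rangle$ I would expand
\[
\langle f_2,\mathcal{M}^k f_2\rangle=\sum_{i,j}\widetilde p(o_j)\sum_{v\in\Gamma.o_i}p_k(o_j,v)\bigl\langle F_j^0,(F_i^0)^{(v)}\bigr\rangle_{L^2(d\omega)},
\]
where $(F_i^0)^{(v)}$ denotes the translate of $F_i^0$ centred at $v$. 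By independence of the labels together with the mean-zero property, the inner integral vanishes whenever $B_r(o_j)\cap B_r(v)=\emptyset$, so the effective summation ranges over a uniformly finite neighbourhood of $o_j$. A Cauchy-Schwarz bound for the self-adjoint operator $M$ on $\ell^2(V(G),m_{\mathrm{st}})$ gives $p_k(u,v)\le\sqrt{\deg(v)/\deg(u)}\,\rho^k$, whence $|\langle f_2,\mathcal{M}^k f_2\rangle|\le C_2\rho^k$. Combining the two bounds and extracting $k$-th roots finishes the argument via Lemma~\ref{lemma:corsp}.

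The hardest part I anticipate is the bookkeeping around the translated cylinder functions $(F_i^0)^{(v)}$: since elements of $\Omega$ are isomorphism classes of rooted labelled graphs, one must check these translates are well-defined (which is why $F$ is restricted to be $\mathrm{Stab}_\Gamma(o_i)$-invariant) and that the restricted family still spans $L_0^2$. The rest of the argument distinguishes itself from the transitive case of Backhausz--Szegedy--Vir\'ag only in the appearance of $\rho_T$, which captures the rate at which the random walk on $G$ equidistributes across the finitely many $\Gamma$-orbits.
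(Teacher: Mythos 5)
Your proposal is correct, and it takes a genuinely different route from the paper's proof. The paper works directly with $\langle f, \mathcal{M}^k f\rangle$ for a single test function $f$ and splits the sum over the target vertex $y$ into near ($d(o_i,y)\le 2r$) and far ($>2r$) contributions, factoring the integral for far pairs by independence and then adding and subtracting to manufacture a term involving the finite Markov chain $M_T$. You instead decompose $L_0^2(\Omega,\nu_{\mathrm{st}})$ into two $\mathcal{M}$-invariant orthogonal subspaces -- the orbit-constant functions (depending only on $[o]$, not on $\omega$) and the orbit-wise mean-zero ones -- so that the cross terms in $\langle f,\mathcal{M}^k f\rangle$ vanish outright. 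The first piece reduces exactly to the action of $M_T$ on a $\widetilde p$-mean-zero vector and gives $\rho_T^k$; the second isolates non-amenability and gives $\rho^k$. You also replace the paper's $p_k(o_i,y)\le C_0(\rho+\varepsilon)^k$ bound (with $\varepsilon\to0$ at the end) by the exact Cauchy--Schwarz estimate $p_k(u,v)\le\sqrt{\deg(v)/\deg(u)}\,\rho^k$ coming from $\|M\|=\rho$, which is cleaner. What the paper's approach buys is that it only needs one algebraic rearrangement of a single expression; what yours buys is a transparent invariant-subspace picture of $\mathcal{M}$ on the Bernoulli graphing, at the cost of verifying (as you correctly do) that the two subspaces are orthogonal, $\mathcal{M}$-invariant, and that translated cylinder functions $(F_i^0)^{(v)}$ are well defined. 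One point worth making explicit in a final write-up: the orbit-constant subspace is exactly the pullback of $\ell^2(T,\widetilde p)$ under the orbit map, so the $\rho_T$ contribution is literally the spectral radius of $M_T$ on mean-zero vectors, matching the definition of $\rho_T$ in the non-bipartite case.
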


\begin{theorem} \label{thm:bipart_spectral_gap}
Let $G$ be as in Theorem $\ref{thm:spectral_gap}$, and assume that $M_T$ is bipartite. Let $\rho < 1$ denote the spectral radius of $G$ on $\ell^2(G, m_{\mathrm{st}})$. The Bernoulli graphing $\mathcal{G}$ is measurably bipartite, with bipartition $X_1 \cup X_2 = V(\mathcal{G})$. Let $L_{00}^2(\Omega,\nu_{\mathrm{st}})$ denote the orthogonal complement of the subspace generated by the functions $\mathbbm{1}_X$ and $\mathbbm{1}_{X_1}-\mathbbm{1}_{X_2}$. Then the spectral radius of $\mathcal{M}$ on $L_{00}^2(\Omega,\nu_{\mathrm{st}})$ is at most $\max\{\rho, \rho_T\}<1$.
\end{theorem}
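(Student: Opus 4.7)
The plan is to apply Lemma~\ref{lemma:corsp} to a convenient spanning family of $L_{00}^2(\Omega,\nu_{\mathrm{st}})$. Consider \emph{cylinder sets} $C\subset\Omega$ specified by one of the orbit representatives $o_{i_0}\in T$, a radius $R\in\N$, and a measurable constraint on the iid $[0,1]$-labels inside $B_G(o_{i_0},R)$; on the support of $\nu_G$ the rooted $R$-ball of a vertex in the orbit of $o_{i_0}$ is automatically $(B_G(o_{i_0},R),o_{i_0})$. Indicators of such cylinders span $L^2(\Omega,\nu_{\mathrm{st}})$, so their projections onto $L_{00}^2$ span $L_{00}^2$, which is what Lemma~\ref{lemma:corsp} requires. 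Write $s\in\{1,2\}$ for the side of the bipartition containing $o_{i_0}$ and $P_C$ for the probability over the iid labelling that the constraint is satisfied at $o_{i_0}$; then $\nu_{\mathrm{st}}(C)=\widetilde{p}(o_{i_0})P_C$ and $C\subseteq X_s$.

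The heart of the argument is an asymptotic formula for $\langle\mathbbm{1}_C,\mathcal{M}^k\mathbbm{1}_C\rangle$. Using $(\mathcal{M}^k\mathbbm{1}_C)(G,o,\omega)=\sum_v p_k(o,v)\,\mathbbm{1}_C(G,v,\omega)$ and integrating the iid labels first, the cylindrical constraints at $o_{i_0}$ and at $v$ decouple whenever $d_G(o_{i_0},v)>2R$. Splitting the $v$-sum accordingly gives
\[
\langle\mathbbm{1}_C,\mathcal{M}^k\mathbbm{1}_C\rangle \;=\; \widetilde{p}(o_{i_0})\,P_C^{2}\!\!\sum_{\substack{v\in\Gamma.o_{i_0}\\ d_G(o_{i_0},v)>2R}}\!\!p_k(o_{i_0},v)\;+\;\mathrm{Err}_k,
\]
where $\mathrm{Err}_k$ collects the contribution from the finitely many $v$ inside $B_G(o_{i_0},2R)$. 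Crucially, $\sum_{v\in\Gamma.o_{i_0}}p_k(o_{i_0},v)=(M_T^k e_{o_{i_0}})(o_{i_0})$ is exactly the orbit-chain return probability.

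To bound the pieces I would use self-adjointness of $M$ on $\ell^2(V(G),m_{\mathrm{st}})$ together with non-amenability, which yields $p_k(o_{i_0},v)\le\rho^k\sqrt{\deg v/\deg o_{i_0}}$; combined with $|B_G(o_{i_0},2R)|<\infty$ this makes $|\mathrm{Err}_k|=O(\rho^k)$. Since $M_T$ is bipartite, $(M_T^k e_{o_{i_0}})(o_{i_0})=0$ for odd $k$; for $k=2m$ the discussion after Lemma~\ref{lemma:markov_chain_convergence} shows that $M_T^{2m}|_{T_s}$ is a non-bipartite reversible chain with stationary measure $2\widetilde{p}|_{T_s}$ and second-largest eigenvalue $\rho_T^{2}$, so Lemma~\ref{lemma:markov_chain_convergence} applied to this chain gives $(M_T^{2m}e_{o_{i_0}})(o_{i_0})=2\widetilde{p}(o_{i_0})+O(\rho_T^{2m})$. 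Combining the two estimates yields
\[
\langle\mathbbm{1}_C,\mathcal{M}^k\mathbbm{1}_C\rangle \;=\; \bigl(1+(-1)^k\bigr)\,\nu_{\mathrm{st}}(C)^{2}\;+\;O\!\bigl(\max\{\rho,\rho_T\}^k\bigr).
\]

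To finish, I would reconcile this with the projection $f$ of $\mathbbm{1}_C$ onto $L_{00}^2$. Since $\nu_{\mathrm{st}}(X_1)=\nu_{\mathrm{st}}(X_2)=\tfrac12$, the coefficients of $\mathbbm{1}_C$ along the $\mathcal{M}$-eigenvectors $\mathbbm{1}$ and $\mathbbm{1}_{X_1}-\mathbbm{1}_{X_2}$ (eigenvalues $1$ and $-1$) are $a=\nu_{\mathrm{st}}(C)$ and $b=(-1)^{s+1}\nu_{\mathrm{st}}(C)$, so orthogonality gives
\[
\langle f,\mathcal{M}^k f\rangle=\langle\mathbbm{1}_C,\mathcal{M}^k\mathbbm{1}_C\rangle-a^{2}-b^{2}(-1)^k=\langle\mathbbm{1}_C,\mathcal{M}^k\mathbbm{1}_C\rangle-\bigl(1+(-1)^k\bigr)\nu_{\mathrm{st}}(C)^{2},
\]
which is $O(\max\{\rho,\rho_T\}^k)$ by the previous step. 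Lemma~\ref{lemma:corsp} then produces $\|\mathcal{M}|_{L_{00}^2}\|\le\max\{\rho,\rho_T\}<1$. The main technical hurdle I foresee is precisely this bipartite bookkeeping: the leading coefficient in the formula above picks up the extra factor $1+(-1)^k$ because the orbit chain mixes only within one side, and the $-1$ eigenspace generated by $\mathbbm{1}_{X_1}-\mathbbm{1}_{X_2}$ must be excised by hand — which is the entire reason for enlarging the orthogonal complement to $L_{00}^2$. As the paper already emphasizes, unlike in the transitive Cayley case one cannot extract this bound simply from $\|\mathcal{M}\|\le\|M\|$, so both $\rho$ and $\rho_T$ have to genuinely appear in the estimate.
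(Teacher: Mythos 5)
Your proposal is correct, and it takes a genuinely different route from the paper. The paper's proof is a short formal reduction: it decomposes $L_{00}^2(\Omega,\nu_{\mathrm{st}})=S_1\oplus S_2$, where $S_j$ is the part of $L_{00}^2$ supported on $X_j$, observes that $\mathcal{M}^2$ preserves each $S_j$, and then invokes the \emph{proof} of Theorem~\ref{thm:non_bipart_spectral_gap}, applied to $\mathcal{M}^2$ (whose orbit chain $M_T^2|_{T_j}$ is non-bipartite with stationary measure $2\widetilde{p}|_{T_j}$ and parameter $\rho_T^2$), to get $\sigma(\mathcal{M}^2|_{S_j})\subseteq[0,\max\{\rho^2,\rho_T^2\}]$ and then takes square roots. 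Your proof instead runs Lemma~\ref{lemma:corsp} directly on $L_{00}^2$ with a concrete spanning family — projections onto $L_{00}^2$ of cylinder indicators $\mathbbm{1}_C$ — and computes $\langle\mathbbm{1}_C,\mathcal{M}^k\mathbbm{1}_C\rangle$ from scratch, splitting into the long-range term (handled by the orbit-chain return probability, with the $(1+(-1)^k)$ oscillation coming from bipartiteness of $M_T$) and the short-range error (handled by the pointwise bound $p_k(u,v)\le\rho^k\sqrt{\deg v/\deg u}$, which is cleaner than the $C_0(\rho+\varepsilon)^k$ bound used in the paper's Theorem~\ref{thm:non_bipart_spectral_gap}). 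Subtracting the coefficients along the eigenvectors $\mathbbm{1}$ and $\mathbbm{1}_{X_1}-\mathbbm{1}_{X_2}$ precisely cancels the $(1+(-1)^k)\nu_{\mathrm{st}}(C)^2$ leading term, and Lemma~\ref{lemma:corsp} finishes. The trade-off: the paper's route is shorter and modular, reusing the non-bipartite computation as a black box, but it leaves implicit that the Theorem~\ref{thm:non_bipart_spectral_gap} argument really does transfer to $\mathcal{M}^2$ restricted to one colour class; your route is longer and requires the $(1+(-1)^k)$ bookkeeping, but it is fully self-contained, makes the appearance of both $\rho$ and $\rho_T$ transparent, and makes explicit that removing the span of $\mathbbm{1}$ and $\mathbbm{1}_{X_1}-\mathbbm{1}_{X_2}$ is exactly what is needed — nothing more. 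One small point to be careful about if you write this up: the constraint defining a cylinder must be invariant under $\Stab_{\Aut(G)}(o_{i_0})$ for $\mathbbm{1}_C$ to be well-defined on $\Omega$, and when $d_G(o_{i_0},v)>2R$ the decoupled probability at $v$ equals $P_C$ because the pushforward of the iid label measure under any rooted isomorphism $(B(v,R),v)\to(B(o_{i_0},R),o_{i_0})$ is again the iid measure; both are routine but worth stating.
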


\begin{proof}[Proof of Theorem~\ref{thm:spectral_gap}]
The content of Theorem~\ref{thm:spectral_gap} is exactly Theorems~\ref{thm:non_bipart_spectral_gap} and~\ref{thm:bipart_spectral_gap}.
\end{proof}

We first prove Theorem~\ref{thm:non_bipart_spectral_gap} and then use it to prove Theorem~\ref{thm:bipart_spectral_gap}.

\begin{proof}[Proof of Theorem~\ref{thm:non_bipart_spectral_gap}]
As before, let $p_k(o,y)$ denote the probability that the 
random walk on $G$ starting at $o$ arrives at $y$ after $k$ steps. We have $\limsup_{k \to \infty} \big(p_k(o,y)\big)^{1/k} = \rho$, so for every $\varepsilon > 0$ there exists some $C_0(o,y,\varepsilon) \in \R$ such that $p_k(o,y) \le C_0(o,y,\varepsilon)(\rho + \varepsilon)^k$ for all $k$.

During this proof, we will write $\mu$ for ${\tt u}^{V(G)}$. We will use Lemma~\ref{lemma:corsp} in the following setting.
Let $H\subseteq L^2_0(\Omega,\nu_{\mathrm{st}})$ be the set of functions $f$ such that
\begin{itemize}
    \item $f$ has zero mean, i.e.
    \[
        \int_{(G,o,\omega)\in \Omega}f(G,o,\omega)~d\nu_{\mathrm{st}}=\sum_{i=1}^t \widetilde{p}(o_i) \int_{\omega\in[0,1]^{V(G)}} f(G,o_i,\omega)~d\mu=0;
    \]
    \item $f$ has norm 1, i.e.
    \[
        \int_{(G,o,\omega)\in \Omega}f^2(G,o,\omega)~d\nu_{\mathrm{st}} =\sum_{i=1}^t \widetilde{p}(o_i) \int_{\omega\in[0,1]^{V(G)}} f^2(G,o_i,\omega)~d\mu=1;
    \]
    \item there exists some $r\ge 0$ such that if we change labels of vertices further than $r$ from the root then the value of $f$ doesn't change.
\end{itemize}

The set $H$ is a spanning subset of $L^2_0(\Omega,\nu_{\mathrm{st}})$.
(Note that a measurable function $f: \Omega \to \R$ defines an $\R$-valued factor of iid on any graph $G$. Indeed, for $\omega \in [0,1]^{V(G)}$ one defines $\big(\Phi(\omega)\big)(v) = f(G,v,\omega)$.)
This is equivalent to saying that any factor of iid process is a limit of block factors; see \cite{lyons2017factors}.

Let us fix an element $f\in H$. Then
\begin{align*}
    \langle\mathcal{M}^k f,f\rangle&=\sum_{i=1}^t \widetilde{p}(o_i) \int_{\omega\in [0,1]^{V(G)}} \sum_{y\in B_{k}(o_i)} p_k(o_i,y) f(G,o_i,\omega)f(G,y,\omega)d\mu\\
    &=\sum_{i=1}^t \widetilde{p}(o_i)  \sum_{y\in B_{k}(o_i)} p_k(o_i,y) \int_{\omega\in [0,1]^{V(G)}} f(G,o_i,\omega)f(G,y,\omega)d\mu.
\end{align*}

We split the sum depending on the distance between $o_i$ and $y$:
\begin{align} \label{eqn:far_term}
    \langle\mathcal{M}^k f,f\rangle&=\sum_{i=1}^t \widetilde{p}(o_i)  \sum_{y\notin B_{2r}(o_i)} p_k(o_i,y) \int_{\omega\in [0,1]^{V(G)}} f(G,o_i,\omega)f(G,y,\omega)d\mu\\ \label{eqn:close_term}
    &+\sum_{i=1}^t \widetilde{p}(o_i) \sum_{y\in B_{2r}(o_i)} p_k(o_i,y) \int_{\omega\in [0,1]^{V(G)}}  f(G,o_i,\omega)f(G,y,\omega)d\mu.
\end{align}

If the distance between $o_i$ and $y$ is bigger than $2r$ then (by the third property of $f$) the values $f(G,o_i,\omega)$ and $f(G,y,\omega)$ depend on labels at disjoint sets of vertices. Since those labels are independent, we have
\[
    \int_{\omega\in [0,1]^{V(G)}} f(G,o_i,\omega)f(G,y,\omega)d\mu=\int_{\omega\in [0,1]^{V(G)}} f(G,o_i,\omega)d\mu \int_{\omega\in [0,1]^{V(G)}}f(G,y,\omega)d\mu.
\]
Therefore the first term, $(\ref{eqn:far_term})$ is 
\begin{align}
    \sum_{i=1}^t \widetilde{p}(o_i)&  \sum_{y\notin B_{2r}(o_i)} p_k(o_i,y) \int_{\omega\in [0,1]^{V(G)}} f(G,o_i,\omega)f(G,y,\omega)d\mu \nonumber\\
    =& \sum_{i=1}^t \widetilde{p}(o_i)  \sum_{y\notin B_{2r}(o_i)} p_k(o_i,y) \int_{\omega\in [0,1]^{V(G)}} f(G,o_i,\omega)d\mu \int_{\omega\in [0,1]^{V(G)}}f(G,y,\omega)d\mu \nonumber\\
    =&\sum_{i=1}^t \widetilde{p}(o_i)  \int_{\omega\in [0,1]^{V(G)}} f(G,o_i,\omega)d\mu \sum_{y\notin B_{2r}(o_i)} p_k(o_i,y)  \int_{\omega\in [0,1]^{V(G)}}f(G,y,\omega)d\mu \nonumber\\
    =&\sum_{i=1}^t \widetilde{p}(o_i)  \int_{\omega\in [0,1]^{V(G)}} f(G,o_i,\omega)d\mu \sum_{y\in V(G)} p_k(o_i,y)  \int_{\omega\in [0,1]^{V(G)}}f(G,y,\omega)d\mu \nonumber\\
    &-\sum_{i=1}^t \widetilde{p}(o_i)  \int_{\omega\in [0,1]^{V(G)}} f(G,o_i,\omega)d\mu \sum_{y\in B_{2r}(o_i)} p_k(o_i,y)  \int_{\omega\in [0,1]^{V(G)}}f(G,y,\omega)d\mu \nonumber\\
    \label{eqn:markov_term}
    =&\sum_{i=1}^t \widetilde{p}(o_i)  \int_{\omega\in [0,1]^{V(G)}} f(G,o_i,\omega)d\mu \sum_{j=1}^t p^{M_T}_k(o_i,o_j)  \int_{\omega\in [0,1]^{V(G)}}f(G,o_j,\omega)d\mu\\
    \label{eqn:indep_close_term}
    &-\sum_{i=1}^t \widetilde{p}(o_i)  \int_{\omega\in [0,1]^{V(G)}} f(G,o_i,\omega)d\mu \sum_{y\in B_{2r}(o_i)} p_k(o_i,y)  \int_{\omega\in [0,1]^{V(G)}}f(G,y,\omega)d\mu.
\end{align}

Along the calculation, we used that $\int_{\omega\in [0,1]^{V(G)}}f(G,y,\omega) \ d \mu$ only depends on the orbit that $y$ is in. Then we grouped all the $y \in \Gamma.{o_j}$ together, and used the fact that 

\[\sum_{y \in \Gamma.o_j} p_k(o_i,y) = p^{M_T}_k(o_i,o_j).\]
Indeed, the probability of the
random walk on $G$ started from $o_i$ ending up at some $y \in \Gamma.o_j$ after $k$ steps is
the same as the probability of the
finite Markov chain $M_T$, starting from $o_i$ ending up in $o_j$ after $k$ steps.

We now use that from any initial state, the finite Markov chain converges to the stationary distribution. That is, we use Lemma~\ref{lemma:markov_chain_convergence}, with the vector 

\[v: o_j \mapsto \int_{\omega\in [0,1]^{V(G)}}f(G,o_j,\omega)d\mu.\]
We get that there exists some $C_1 \in \mathbb{R}$ such that 

\[\left|\sum_{j=1}^t p^{M_T}_k(o_i,o_j)  \int_{\omega\in [0,1]^{V(G)}}f(G,o_j,\omega)d\mu - \sum_{j=1}^t \widetilde{p}(o_j) \int_{\omega\in[0,1]^{V(G)}} f(G,o_j,\omega)~d\mu \right| \leq C_1 \rho^k_T.\]
Note that $C_1$ might depend on $f$, but not on $k$. The first property of $f$ says the second term in the absolute value is 0, so we have

\[\left|\sum_{j=1}^t p^{M_T}_k(o_i,o_j)  \int_{\omega\in [0,1]^{V(G)}}f(G,o_j,\omega)d\mu \right| \leq C_1 \rho^k_T.\]
We use this to bound the term (\ref{eqn:markov_term}):
\begin{align*}
    \left|\sum_{i=1}^t \widetilde{p}(o_i)  \int_{\omega\in [0,1]^{V(G)}} f(G,o_i,\omega)d\mu \sum_{j=1}^t p^{M_T}_k(o_i,o_j)  \int_{\omega\in [0,1]^{V(G)}}f(G,o_j,\omega)d\mu\right|\\
    \le \sum_{i=1}^t \widetilde{p}(o_i)  \left|\int_{\omega\in [0,1]^{V(G)}} f(G,o_i,\omega)d\mu\right| \left|\sum_{j=1}^t p^{M_T}_k(o_i,o_j)  \int_{\omega\in [0,1]^{V(G)}}f(G,o_j,\omega)d\mu\right|\\
    \le\sum_{i=1}^t \widetilde{p}(o_i)  \left|\int_{\omega\in [0,1]^{V(G)}} f(G,o_i,\omega)d\mu\right| C_1\rho^k_T=C_2\rho_T^k.
\end{align*}

To recap, we had $\langle\mathcal{M}^k f,f\rangle = (\ref{eqn:far_term}) + (\ref{eqn:close_term}) = (\ref{eqn:markov_term}) - (\ref{eqn:indep_close_term}) + (\ref{eqn:close_term}).$ We have already bounded the absolute value of $(\ref{eqn:markov_term})$, so we now bound the absolute values of $(\ref{eqn:close_term})$ and $(\ref{eqn:indep_close_term})$. These terms, however, correspond to cases where the random walk on $G$ arrives close to the starting point after $k$ steps. As $G$ is non-amenable, the probability of this happening decays exponentially in $k$. 

Formally, let us recall that $p_k(o_i,y)\le C_0(o_i, y,\varepsilon)(\rho+\varepsilon)^k$. We write 

\begin{align*}
    \big|(\ref{eqn:close_term})\big| &= \left|\sum_{i=1}^t \widetilde{p}(o_i) \sum_{y\in B_{2r}(o_i)} p_k(o_i,y) \int_{\omega\in [0,1]^{V(G)}}  f(G,o_i,\omega)f(G,y,\omega)d\mu\right| \leq \\
    & \leq (\rho+\varepsilon)^k \underbrace{\sum_{i=1}^t \widetilde{p}(o_i) \sum_{y\in B_{2r}(o_i)} C_0(o_i,y,\varepsilon) \left|\int_{\omega\in [0,1]^{V(G)}}  f(G,o_i,\omega)f(G,y,\omega)d\mu\right|}_{C_3},
\end{align*}

\begin{align*}
    \big|(\ref{eqn:indep_close_term})\big| &= \left|\sum_{i=1}^t \widetilde{p}(o_i) \sum_{y\in B_{2r}(o_i)} p_k(o_i,y) \int_{\omega\in [0,1]^{V(G)}}f(G,o_i,\omega)d\mu\int_{\omega\in [0,1]^{V(G)}}f(G,y,\omega)d\mu \right| \leq \\
    & \leq (\rho+\varepsilon)^k \underbrace{\sum_{i=1}^t \widetilde{p}(o_i) \sum_{y\in B_{2r}(o_i)} C_0(o_i,y,\varepsilon) \left|\int_{\omega\in [0,1]^{V(G)}}f(G,o_i,\omega)d\mu\int_{\omega\in [0,1]^{V(G)}}f(G,y,\omega)d\mu\right|}_{C_4}.
\end{align*}

Note that the constants $C_3$ and $C_4$ depend on $f$, but not on $k$. We now combine our bounds and get
\begin{align*}
    \limsup_{k\to \infty} \left|\langle \mathcal{M}^k f,f\rangle \right|^{1/k} &\le \limsup_{k\to \infty} \big(C_2\rho_T^k+(C_3+C_4)(\rho+\varepsilon)^k\big)^{1/k} \\
    &= \lim_{k\to \infty} \big(C_2\rho_T^k+(C_3+C_4)(\rho+\varepsilon)^k\big)^{1/k} = \max(\rho_T,\rho+\varepsilon).
\end{align*}

This holds for any $\varepsilon >0$, so we have $\limsup_{k\to \infty} \left|\langle \mathcal{M}^k f,f\rangle \right|^{1/k} \leq \max(\rho_T, \rho)$. By Lemma~\ref{lemma:corsp}, we now have $||\mathcal{M}|_{L^2_0(\Omega,\nu_{\mathrm{st}})}|| \leq \max(\rho_T,\rho)$, which completes the proof.
\end{proof}

\begin{proof}[Proof of Theorem~\ref{thm:bipart_spectral_gap}]
Note that by bipartiteness, the subspaces $\1_{X_1}^\perp$ and $\1_{X_2}^\perp$ of $L^2(\Omega)$ are invariant under the action of $\mathcal{M}^2$, so
$\mathcal{M}^2$ is well-defined as an operator on $S_1=\{f\in L^2_{00}(\Omega,\nu_{\mathrm{st}}) : f|_{X_2}\equiv0\}$ and on $S_2=\{f\in L^2_{00}(\Omega,\nu_{\mathrm{st}}) : f|_{X_1}\equiv0\}$.
Moreover, $L^2_{00}(\Omega,\nu_{\mathrm{st}})$ can be written as the internal direct sum
\[L_{00}^2(\Omega, \nu_{\mathrm{st}}) = S_1 \oplus S_2,\]
so the spectrum of $\mathcal{M}^2|_{L_{00}^2(\Omega, \nu_{\mathrm{st}})}$ satisfies
\[
\sigma\left(\mathcal{M}^2|_{L_{00}^2(\Omega, \nu_{\mathrm{st}})}\right) = \sigma\left(\mathcal{M}^2|_{S_1}\right) \cup \sigma\left(\mathcal{M}^2|_{S_2}\right).
\]

$M^2_T$ restricted to $T_1$ or $T_2$ 
is not bipartite, so the proof of Theorem~\ref{thm:non_bipart_spectral_gap}, applied to $\mathcal{M}^2$ on
$S_1$ and on $S_2$,
yields that the two spectral radii are both at most $\max(\rho^2, \rho_T^2)$.

Finally, we have $\sigma(\mathcal{M}^2|_{L_{00}^2}) = \{\lambda^2 \mid \lambda \in \sigma(\mathcal{M}_{L_{00}^2})\}$, which completes the proof.
\end{proof}

\section{Perfect matchings in quasi-transitive graphs}\label{section:perfect_matchings}
In this section, we prove Corollary~\ref{cor:quasi_transitive_matching} using our spectral-theoretic results. We will obtain the necessary expansion properties from the spectral gap through the following two lemmas.

\begin{lemma}[\cite{LyonsNazarov}, Lemma 2.3] \label{lemma:LyonsNazarov_spectral_gap_implies_expansion}
Let $\mathcal{G}=(X,E, \nu)$ be a graphing, and let $\rho_{\mathcal{G}} = \rho\left(\mathcal{M}|_{L_{0}^2(X,\nu_{\textrm{st}})}\right)$. Let $B \subseteq X$ be a measurable subset, and let $b= \nu_{\textrm{st}}(B)/\nu_{\textrm{st}}(X)$ denote the degree-biased density of $B$ in $X$. Let $b'=\nu_{\textrm{st}}(N(B))/\nu_{\textrm{st}}(X)$ denote the degree-biased density of the neighbors of $B$ in $X$. Then
\[b' \geq \frac{1}{\rho_{\mathcal{G}}^2 (1-b) + b} \cdot b.\]
\end{lemma}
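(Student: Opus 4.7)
The plan is to test the Markov operator against the indicator function of $B$ via a standard Hilbert-space argument: decompose $\mathbf{1}_B$ into its mean and a mean-zero part, apply the spectral gap to the mean-zero part, and then use Cauchy--Schwarz against $\mathbf{1}_{N(B)}$, exploiting the fact that $\mathcal{M}\mathbf{1}_B$ is supported on $N(B)$.

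More concretely, the first step is to write $\mathbf{1}_B = b\,\mathbf{1}_X + g$, where $g \in L^2_0(X,\nu_{\mathrm{st}})$ because $\int \mathbf{1}_B\,d\nu_{\mathrm{st}} = b\,\nu_{\mathrm{st}}(X)$. A direct computation gives $\|g\|^2 = b(1-b)\,\nu_{\mathrm{st}}(X)$. Since $\mathcal{M}\mathbf{1}_X = \mathbf{1}_X$ (the constant function is the top eigenvector, as $\nu_{\mathrm{st}}$ is stationary), we get
\[
\|\mathcal{M}\mathbf{1}_B\|^2 = b^2\,\nu_{\mathrm{st}}(X) + \|\mathcal{M}g\|^2 \leq \bigl(b^2 + \rho_{\mathcal{G}}^2\,b(1-b)\bigr)\,\nu_{\mathrm{st}}(X),
\]
where the inequality uses that $g \perp \mathbf{1}_X$ together with $\rho_{\mathcal{G}} = \|\mathcal{M}|_{L^2_0}\|$.

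The second step is to observe that $\mathcal{M}\mathbf{1}_B$ vanishes off $N(B)$, by definition of the Markov operator. Now pair $\mathcal{M}\mathbf{1}_B$ with $\mathbf{1}_{N(B)}$: using self-adjointness,
\[
\langle \mathcal{M}\mathbf{1}_B, \mathbf{1}_{N(B)}\rangle = \langle \mathbf{1}_B, \mathcal{M}\mathbf{1}_{N(B)}\rangle.
\]
For every $x \in B$, all neighbors of $x$ lie in $N(B)$, so $(\mathcal{M}\mathbf{1}_{N(B)})(x) = 1$ for $\nu_{\mathrm{st}}$-a.e.\ $x \in B$. Therefore $\langle \mathbf{1}_B, \mathcal{M}\mathbf{1}_{N(B)}\rangle = \nu_{\mathrm{st}}(B) = b\,\nu_{\mathrm{st}}(X)$.

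The third step is the Cauchy--Schwarz inequality applied to $\mathcal{M}\mathbf{1}_B$ and $\mathbf{1}_{N(B)}$:
\[
b\,\nu_{\mathrm{st}}(X) \;=\; \langle \mathcal{M}\mathbf{1}_B, \mathbf{1}_{N(B)}\rangle \;\leq\; \|\mathcal{M}\mathbf{1}_B\|\cdot \|\mathbf{1}_{N(B)}\| \;\leq\; \sqrt{\bigl(b^2 + \rho_{\mathcal{G}}^2\,b(1-b)\bigr)\,\nu_{\mathrm{st}}(X)}\cdot \sqrt{b'\,\nu_{\mathrm{st}}(X)}.
\]
Squaring and cancelling $\nu_{\mathrm{st}}(X)^2$ yields $b^2 \leq \bigl(b^2 + \rho_{\mathcal{G}}^2 b(1-b)\bigr)\,b'$, which rearranges to the claimed bound $b' \geq b/\bigl(\rho_{\mathcal{G}}^2(1-b)+b\bigr)$.

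There is no real obstacle here; the only point requiring some care is that $\mathcal{M}\mathbf{1}_B$ is supported precisely on $N(B)$, which has to be checked from the definition of the graphing Markov operator (and the identity $\mathcal{M}\mathbf{1}_X = \mathbf{1}_X$ requires $\nu_{\mathrm{st}}$ rather than $\nu$, which is exactly why the lemma is stated using the degree-biased measure). Everything else is a standard $L^2$ computation, and the argument is oblivious to the structure of $\mathcal{G}$ beyond self-adjointness of $\mathcal{M}$ on $L^2(X,\nu_{\mathrm{st}})$ and the spectral bound on $L^2_0$.
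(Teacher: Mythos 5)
Your proof is correct and follows essentially the same route as the paper's: the paper cites Lyons--Nazarov for this lemma but writes out the directly analogous argument for the bipartite version (Lemma~\ref{lemma:measurably_bipart_expansion_withoutregularity}), which uses the identical decomposition of $\mathbf{1}_B$, the Pythagorean/spectral bound on $\|\mathcal{M}\mathbf{1}_B\|^2$, and Cauchy--Schwarz against $\mathbf{1}_{N(B)}$. The only cosmetic difference is that the paper obtains $\langle\mathcal{M}\mathbf{1}_B,\mathbf{1}_{N(B)}\rangle=\nu_{\textrm{st}}(B)$ by noting $\mathcal{M}\mathbf{1}_B$ vanishes off $N(B)$ and using $\mathcal{M}\mathbf{1}=\mathbf{1}$, whereas you move $\mathcal{M}$ onto $\mathbf{1}_{N(B)}$ and use $\mathcal{M}\mathbf{1}_{N(B)}\equiv 1$ on $B$ --- these are dual and equivalent.
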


We also show a version of Lemma~\ref{lemma:LyonsNazarov_spectral_gap_implies_expansion} for measurably bipartite graphings.

\begin{lemma} \label{lemma:measurably_bipart_expansion_withoutregularity}
Let $\mathcal{G}=(X_1,X_2,E, \nu)$ be a measurably bipartite graphing,
and let $\rho_{\mathcal{G}} = \rho\left(\mathcal{M}|_{L_{00}^2(X,\nu_{\textrm{st}})}\right)$. Let $B \subseteq X_1$ be a measurable subset, and let $b= \nu_{\textrm{st}}(B)/\nu_{\textrm{st}}(X_1)$ denote the degree-biased density of $B$ in $X_1$. Let $b'=\nu_{\textrm{st}}(N(B))/\nu_{\textrm{st}}(X_2)$ denote the degree-biased density of the neighbors of $B$ in $X_2$. Then
\[b' \geq \frac{1}{\rho_{\mathcal{G}}^2 (1-b) + b} \cdot b.\]
The same holds for measurable subsets $B \subseteq X_2$.
\end{lemma}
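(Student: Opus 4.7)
The plan is to mimic the original Lyons--Nazarov argument from Lemma~\ref{lemma:LyonsNazarov_spectral_gap_implies_expansion}, but taking advantage of the measurable bipartition so that the relevant test functions land in $L_{00}^2(\Omega,\nu_{\mathrm{st}})$ rather than just $L_0^2$. A preliminary observation is that for a bipartite graphing the equality $\deg(x)=\deg_{X_{3-i}}(x)$ for $x\in X_i$, combined with the graphing condition, forces $\nu_{\mathrm{st}}(X_1)=\nu_{\mathrm{st}}(X_2)=1/2$, so the densities $b$ and $b'$ are measured against equally-sized halves.

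First I would introduce the centered indicators
\[
f=\mathbbm{1}_B - b\,\mathbbm{1}_{X_1},\qquad g=\mathbbm{1}_{N(B)} - b'\,\mathbbm{1}_{X_2}.
\]
Because $f$ is supported on $X_1$ and has zero $\nu_{\mathrm{st}}$-integral on $X_1$, its inner products with both $\mathbbm{1}_X$ and $\mathbbm{1}_{X_1}-\mathbbm{1}_{X_2}$ vanish, placing $f\in L_{00}^2(\Omega,\nu_{\mathrm{st}})$; the same check works for $g$. A direct integration shows $\|f\|^2=\tfrac{1}{2}b(1-b)$ and $\|g\|^2=\tfrac{1}{2}b'(1-b')$.

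Next I would compute $\langle\mathcal{M}f,g\rangle$ by exploiting bipartiteness ($\mathcal{M}$ swaps $X_1$ and $X_2$, so $\mathcal{M}\mathbbm{1}_{X_2}=\mathbbm{1}_{X_1}$) and the self-adjointness of $\mathcal{M}$. The key point is that $\mathcal{M}\mathbbm{1}_{N(B)}\equiv 1$ on $B$, since every neighbor of a point of $B$ lies in $N(B)$. Combined with $\langle f,\mathbbm{1}_{X_1}\rangle=0$, a short calculation yields
\[
\langle \mathcal{M}f,g\rangle = \langle f,\mathcal{M}\mathbbm{1}_{N(B)}\rangle = \nu_{\mathrm{st}}(B) - b\,\nu_{\mathrm{st}}(N(B))=\tfrac{1}{2}b(1-b').
\]
On the other hand, since $f,g\in L_{00}^2$, Cauchy--Schwarz together with the spectral bound gives
\[
|\langle \mathcal{M}f,g\rangle|\le \|\mathcal{M}f\|\,\|g\|\le \rho_{\mathcal{G}}\|f\|\,\|g\| = \tfrac{1}{2}\rho_{\mathcal{G}}\sqrt{b(1-b)\,b'(1-b')}.
\]

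Putting the two estimates together, squaring and dividing by $b'(1-b')$ (treating the degenerate cases $b'\in\{0,1\}$ separately, where the claim is trivial) produces
\[
b(1-b')\le \rho_{\mathcal{G}}^2(1-b)\,b',
\]
which rearranges to the stated bound $b'\ge b/(\rho_{\mathcal{G}}^2(1-b)+b)$. The same argument with the roles of $X_1$ and $X_2$ swapped handles subsets $B\subseteq X_2$. The only conceptual subtlety — and where I expect one would need to be careful — is verifying that the test function $f$ really is orthogonal to \emph{both} $\mathbbm{1}_X$ and $\mathbbm{1}_{X_1}-\mathbbm{1}_{X_2}$, so that the sharper spectral radius $\rho_{\mathcal{G}}$ from $L_{00}^2$ (rather than from $L_0^2$) is applicable; this is what makes the bipartite bound nontrivial, since $\mathcal{M}$ itself has $-1$ in its spectrum on $L_0^2$ via the eigenfunction $\mathbbm{1}_{X_1}-\mathbbm{1}_{X_2}$.
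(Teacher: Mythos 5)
Your proof is correct, and it is essentially the same argument as the paper's — same spectral tool (the $L_{00}^2$ spectral radius), same centered indicator $f_B = \mathbbm{1}_B - b\,\mathbbm{1}_{X_1}$, same reliance on $\nu_{\mathrm{st}}(X_1)=\nu_{\mathrm{st}}(X_2)=\tfrac12$. The only difference is cosmetic, in how Cauchy--Schwarz is deployed: the paper starts from $\nu_{\mathrm{st}}(B)=\langle\mathcal{M}\mathbbm{1}_B,\mathbbm{1}_{B'}\rangle$, applies Cauchy--Schwarz with the \emph{uncentered} $\mathbbm{1}_{B'}$, and then bounds $\|\mathcal{M}\mathbbm{1}_B\|^2$ via the orthogonal decomposition $\mathcal{M}\mathbbm{1}_B = b\,\mathbbm{1}_{X_2}+\mathcal{M}f_B$; you instead center both indicators, compute $\langle\mathcal{M}f,g\rangle=\tfrac12 b(1-b')$, and apply the spectral bound symmetrically to get $b(1-b')\le\rho_{\mathcal{G}}^2(1-b)b'$. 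The two chains of inequalities are algebraically equivalent and give the identical final bound, so this is a valid repackaging rather than a different route.
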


\begin{proof}[Proof, following Lemma 2.3 in \cite{LyonsNazarov}]
First we note that by the graphing condition (\ref{eqn:graphing}), we must have $\nu_{\textrm{st}}(X_1)=\nu_{\textrm{st}}(X_2)=\frac{1}{2}$:
\[
\nu_{\textrm{st}}(X_1)=\frac{\int_{X_1}\deg(x) d\nu}{\int_X\deg(x) d\nu}=\frac{\int_{X_1}\deg_{X_2}(x) d\nu}{\int_X\deg(x) d\nu}=\frac{\int_{X_2}\deg_{X_1}(x) d\nu}{\int_X\deg(x) d\nu}=\frac{\int_{X_2}\deg(x) d\nu}{\int_X\deg(x) d\nu}=\nu_{\textrm{st}}(X_2).
\]
Since $\mathcal{M} \1_B$ is constant 0 on the complement of $B'=N(B)$, we have
\[ \nu_{\textrm{st}}(B) = \langle \1_B, \1 \rangle = \langle \1_B, \mathcal{M} \1  \rangle = \langle \mathcal{M} \1_B, \1 \rangle = \langle \mathcal{M} \1_B, \1_{B'} \rangle.\]
Consequently,
\begin{equation} \label{eqn:CSB}
\nu_{\textrm{st}}(B)^2=\langle \mathcal{M} \1_B, \1_{B'} \rangle^2 \leq ||\mathcal{M} \1_B||^2 \cdot ||\1_{B'}||^2 = ||\mathcal{M} \1_B||^2 \cdot \nu_{\textrm{st}}(B') = ||\mathcal{M} \1_B||^2 \cdot \frac{b'}{2}.
\end{equation}

We split $\1_B$ as follows: $\1_B = b\1_{X_1} + f_B$, where $f_B = \1_B - b\1_{X_1} = (1-b)\1_{B} + (-b) \1_{X_1 \setminus B}$. Notice that $f_B \perp \1$ and $f_B \perp \1_{X_1} - \1_{X_2}$, therefore $||\mathcal{M} f_B|| \leq \rho_{\mathcal{G}} \cdot ||f_B||$. Moreover,
\[||f_B||^2= (1-b)^2 \cdot \nu_{\textrm{st}}(B) + b^2 \cdot \nu_{\textrm{st}}(X_1 \setminus B) = (1-b)^2 \cdot \frac{b}{2} + b^2 \cdot \frac{1-b}{2} = \frac{b(1-b)}{2}.\]

Now $\mathcal{M}\1_B=b\cdot \mathcal{M}\1_{X_1} + \mathcal{M}f_B= b\1_{X_2} + \mathcal{M}f_B$. Again, $\1_{X_2} \perp \mathcal{M}f_B$ because $\langle \1_{X_2}, \mathcal{M}f_B \rangle = \langle \mathcal{M} \1_{X_2}, f_B \rangle = \langle \1_{X_1}, f_B \rangle = 0$. Hence,
\begin{equation} \label{eqn:P1_B_square}
||\mathcal{M} \1_B||^2 = b^2 ||\1_{X_2}||^2 + ||\mathcal{M} f_B||^2 \leq b^2 \cdot \nu(X_2) + \rho_{\mathcal{G}}^2 \cdot ||f_B||^2 = \frac{1}{2}\left( b^2 + \rho_{\mathcal{G}}^2 b (1-b)\right).
\end{equation}

Putting (\ref{eqn:CSB}) and (\ref{eqn:P1_B_square}) together, we get
\[b' \geq \frac{2\nu_{\textrm{st}}(B)^2}{||\mathcal{M} \1_B||^2}=\frac{b^2}{2||\mathcal{M} \1_B||^2} \geq \frac{b^2}{b^2 + \rho_{\mathcal{G}}^2 b(1-b)} = \frac{1}{\rho_{\mathcal{G}}^2 (1-b) + b} \cdot b.\]
\end{proof}

\begin{proof}[Proof of Corollary~\ref{cor:quasi_transitive_matching}]
Let $M_T$ denote the finite state Markov chain defined by the quasi-transitive graph $G$ described in subsection~\ref{subsec:unimod_qt_graphs}. If $M_T$ is not bipartite, we have spectral gap on $L_0^2(V(\mathcal{G}),\nu)$ by Theorem~\ref{thm:non_bipart_spectral_gap}, which implies vertex expansion by Lemma~\ref{lemma:LyonsNazarov_spectral_gap_implies_expansion}, and Theorem~\ref{thm:graphing_perfect_matching} provides the perfect matching. 
If $M_T$ is bipartite, the Bernoulli graphing $\mathcal{G}$ is measurably bipartite and has spectral gap by Theorem~\ref{thm:bipart_spectral_gap}. This implies bipartite expansion by Lemma~\ref{lemma:measurably_bipart_expansion_withoutregularity}. The bipartite expansion implies the existence of a perfect matching by Theorem~\ref{thm:bipartite_graphing_perfect_matching}.

Note that we use the regularity of $G$, as it implies that the probability measures $\nu$ (used in Theorems~\ref{thm:graphing_perfect_matching} and \ref{thm:bipartite_graphing_perfect_matching}) and $\nu_{\textrm{st}}$ (used in Lemmas~\ref{lemma:LyonsNazarov_spectral_gap_implies_expansion} and \ref{lemma:measurably_bipart_expansion_withoutregularity}) coincide.
\end{proof}

\begin{remark} \label{rmk:quasi_trans_no_perfect_matching}
Abért, Csóka, Lippner and Terpai show in \cite{csoka2017invariant} that any infinite transitive graph has a perfect matching. The following example shows that this is not true for quasi-transitive graphs. Therefore if we want to extend the result of Lyons and Nazarov on factor-of-iid perfect matchings beyond transitive graphs, assuming $G$ to be bipartite is necessary.

Let $G$ be any unimodular transitive non-amenable $2d$-regular graph, e.g.\ the tree $T_{2d}$. Let us now attach two pendant $K_{2d+5}^-$ (the complete graph minus an edge) to every vertex of $G$ so that the resulting graph $\widetilde{G}$ is $2d+4$-regular and has three orbits. $\widetilde{G}$ is quasi-isometric to $G$, and so it is non-amenable. To see that it is unimodular as well, we refer to \cite{beringer2017}, where it is shown that performing certain local changes preserves unimodularity. Every vertex $v$ in $\widetilde{G}$ corresponding to an original vertex in $G$ is now a cut vertex, and at least two of the components left in $\widetilde{G}$ when $v$ is removed are finite and having odd order. $\widetilde{G}$ has therefore no perfect matching at all, let alone a factor-of-iid one.
\end{remark}

\section{Balanced orientations} \label{sec:balanced_orientation}
In this subsection, we prove Theorem~\ref{thm:non-amenable_balanced_orientation}. We will use an auxiliary bipartite graph $G^*$ whose perfect matchings correspond to balanced orientations of our graph $G$. 
This connection is implicit in Schrijver's paper about counting eulerian orientations \cite{schrijver1983bounds}, and first explicit constructions of pairs of graphs in which a balanced orientation of one is a perfect matching of the other were given by Mihail and Winkler \cite{MihailWinkler}.
The auxiliary graph $G^*$ is constructed from $G$ by local transformations, which makes sure that it is
quasi-isometric to $G$.

\begin{defi}
Let $G$ be a simple graph in which every vertex has an even degree. Then we define a simple graph $G^*$ as follows. $G^*$ has a vertex for every edge $e\in E(G)$ and $\deg(v)/2$ vertices for every vertex $v\in V(G)$, i.e.
\[
V(G^*)=\{x_e \colon e\in E(G)\}\cup\{v_i \colon v\in V(G), i\in[\deg(v)/2]\}.
\]
Then every vertex corresponding to a former edge is joined to all copies of its former endpoints.
\[
E(G^*)=\{x_{uv}v_i \colon uv\in E(G), i\in[\deg(v)/2]\}.
\]
\end{defi}

The vertices $x_e \in V(G^*)$ are called \emph{edge-type} vertices of $G^*$, and $v_i \in V(G^*)$ are called \emph{vertex-type} vertices. Any perfect matching $M$ in $G^*$ then defines a balanced orientation of $G$ by orienting an edge $e \in E(G)$ towards its endpoint $v$ if and only if $x_e$ and $v_i$ are matched by $M$ for some $i \in [\deg(v)/2]$.

We now introduce the same construction starting from a graphing $\mathcal{G}$. Recall that for a graphing $(\mathcal{G},\nu)$ we denote by $\nu_E$ the edge measure on $E(\mathcal{G})$.

\begin{defi} \label{def:aux_graphing}
Let $(\mathcal{G},\nu)$ be a graphing 
with finite average degree $\overline{\deg}=2\nu_E(E(\mathcal{G}))<\infty$ in which almost every vertex has even degree. Then we define the measurably bipartite auxiliary graphing $(\mathcal{G}^*,\nu^*)$ as follows.
\begin{itemize}
    \item $V(\mathcal{G}^*)=X_1\cup X_2$, where $X_1=E(\mathcal{G})$ and $X_2=\bigcup_{i=1}^\infty Y_i\times\{i\}$, where $Y_i=\{x\in V(\mathcal{G}) ~|~ \deg (x)\ge 2i\}$. Let us denote by $\pi: X_2\to V(\mathcal{G})$ the projection onto the first coordinate.
    \item The measure $\nu^*$ is defined by \[\nu^*|_{X_1}=\frac{1}{2\nu_E\big(E(\mathcal{G})\big)}\nu_E, \quad \nu^*|_{Y_i\times \{i\}} = \frac{1}{\int_{V(\mathcal{G})}\deg(v)d\nu(v)}\nu|_{Y_i}.\] 
    \item For $e \in X_1, x \in X_2$ there is an edge $ex \in E(\mathcal{G}^*)$ connecting them if and only if $\pi(x)\in e$.
\end{itemize}
\end{defi}
To check that $\mathcal{G}^*$ is indeed a graphing, we compute for any $A\subseteq X_1$ and $B\subseteq X_2$ that
\begin{align*}
    \int_{B}\deg_A(v)d\nu^*(v) &= \frac{\int_{V(\mathcal{G})} \big|\pi^{-1}(v) \cap B \big| \cdot \big|\{a\in A ~|~ v \textrm{ is incident to } a \}\big| \ d\nu (v)}{\int_{V(\mathcal{G})} \deg(u) d\nu (u)}=\\
 &= \frac{\int_A|\pi^{-1}(u)\cap B|+|\pi^{-1}(v)\cap B|d\nu_E(uv)}{2\nu_E(E(\mathcal{G}))}=\int_A\deg_B(e)d\nu^*(e).
\end{align*}

As in the discrete case, a measurable matching $M \subseteq E(\mathcal{G}^*)$ defines a measurable balanced orientation of $\mathcal{G}$ by orienting an edge $e \in E(\mathcal{G})$ towards its endpoint $v$ if $e$ and $(v,i)$ are matched by $M$ for some $i\in [\deg(v)/2]$.

We now go on to relate expansion properties of $\mathcal{G}$ to those of $\mathcal{G}^*$.
Let us define the \emph{Cheeger constant} of $\mathcal{G}$ as
\[
\Phi_{\textrm{st}}=\inf\left\{\frac{\int_S\deg_{N_{\mathcal{G}}(S)\setminus S}(u) d\nu(u)}{\nu_{\textrm{st}}(S)} ~\Big|~ 0<\nu_{\textrm{st}}(S)\leq\frac{1}{2}\right\}.
\]
Note that in this degree-biased version, we may have $\Phi_{\textrm{st}}>0$ even when the set of isolated vertices has positive $\nu$-measure.

\begin{lemma} \label{lemma:cheeger_implies_aux_bipart_expansion}
Let $(\mathcal{G},\nu)$ be a graphing with bounded average degree $\overline{\deg}<\infty$ and Cheeger constant $\Phi_{\textrm{st}}(\mathcal{G}) >0$. Then $(\mathcal{G}^*,\nu^*)$ has bipartite expansion, that is, there is an $\varepsilon>0$ such that for any $A\subseteq X_1$ and $B\subseteq X_2$, we have
\[
    \nu^*\big(N_{\mathcal{G}^*}(A)\big) \ge \min\left\{\left(1+\varepsilon\right)\nu^*(A),\frac{1}{4}+\varepsilon\right\}
\quad    \text{ and } \quad
\nu^*\big(N_{\mathcal{G}^*}(B)\big) \ge \min\left\{\left(1 + \varepsilon\right)\nu^*(B),\frac{1}{4}+\varepsilon\right\}.
\]
In particular, $\varepsilon=\min\left\{\frac{3}{20},\frac{\Phi_{\textrm{st}}(\mathcal{G})}{4\overline{\deg}}\right\}$ satisfies this.
\end{lemma}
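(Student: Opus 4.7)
The plan is to reduce both bipartite-expansion inequalities for $\mathcal{G}^*$ to the Cheeger condition on $\mathcal{G}$. First I would verify the normalization $\nu^*(X_1) = \nu^*(X_2) = 1/2$, which is immediate from $\overline{\deg} = 2\nu_E(E(\mathcal{G})) = \int_{V(\mathcal{G})}\deg\, d\nu$ together with the identity $\sum_{i \geq 1}\mathbbm{1}_{\{\deg(v) \geq 2i\}} = \deg(v)/2$, so $\mathcal{G}^*$ has the balanced structure required for Theorem~\ref{thm:bipartite_graphing_perfect_matching}.

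For $A \subseteq X_1 = E(\mathcal{G})$, let $S = V(A) \subseteq V(\mathcal{G})$ denote the set of endpoints of edges in $A$. The key structural observation is $N_{\mathcal{G}^*}(A) = \pi^{-1}(S)$, whence $\nu^*(N(A)) = \nu_{\textrm{st}}(S)/2$. Using the edge-measure identity $\nu_E(A) = \tfrac{1}{2}\int_V \deg_A(x)\, d\nu$ together with the fact that any edge between $S$ and its complement is automatically absent from $A$, I obtain
\[
\nu^*(N(A)) - \nu^*(A) \;\geq\; \frac{1}{2\overline{\deg}} \int_S \deg_{V(\mathcal{G}) \setminus S}(x)\, d\nu(x).
\]
For $B \subseteq X_2$, set $T = \pi(B)$. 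Since $N_{\mathcal{G}^*}(B)$ depends only on $T$, while enlarging $B$ only makes the desired inequality harder, it suffices to treat $B = \pi^{-1}(T)$. A direct computation combined with the graphing identity $\int_{V \setminus T} \deg_T \, d\nu = \int_T \deg_{V \setminus T}\, d\nu$ then yields in fact an equality,
\[
\nu^*(N(B)) - \nu^*(B) \;=\; \frac{1}{2\overline{\deg}} \int_T \deg_{V(\mathcal{G}) \setminus T}(x)\, d\nu(x).
\]

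In both cases I would bound the boundary integral from below by the Cheeger condition: applied to $S$ (resp.\ $T$) directly when it has $\nu_{\textrm{st}}$-measure at most $1/2$, and to its complement (transferring via the graphing identity) otherwise. This gives $\int_S \deg_{V(\mathcal{G}) \setminus S}\, d\nu \;\geq\; \Phi_{\textrm{st}} \min\{\nu_{\textrm{st}}(S), \nu_{\textrm{st}}(S^c)\}$, and analogously for $T$.

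The last step is a short numerical case analysis verifying that $\varepsilon = \min\{3/20,\, \Phi_{\textrm{st}}/(4\overline{\deg})\}$ works. Writing $\alpha = \Phi_{\textrm{st}}/\overline{\deg}$, so $\alpha \geq 4\varepsilon$, the easy regimes are $\nu_{\textrm{st}}(S) \leq 1/2$ (Cheeger on $S$ directly yields $\nu^*(N(A)) \geq (1+\alpha)\nu^*(A) \geq (1+\varepsilon)\nu^*(A)$) and $\nu_{\textrm{st}}(S) \geq 1/2 + 2\varepsilon$ (the absolute bound $\nu^*(N(A)) \geq 1/4 + \varepsilon$ is immediate). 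The analogous dichotomy already suffices for Case 2 throughout: using the equality above and the fact that $\tau \mapsto \tau/2 + \alpha(1-\tau)/2$ is non-decreasing in $\tau$ with value $1/4 + \alpha/4 \geq 1/4 + \varepsilon$ at $\tau = 1/2$, one gets $\nu^*(N(B)) \geq 1/4 + \varepsilon$ for every $\tau \geq 1/2$. The main obstacle is the borderline regime $\nu_{\textrm{st}}(S) \in (1/2,\, 1/2 + 2\varepsilon)$ of Case 1: here $\nu^*(A)$ is not controlled by a preimage comparison, and one must substitute the Cheeger gap directly into the multiplicative inequality $(1+\varepsilon)\nu^*(A) \leq \nu^*(N(A))$. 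A short algebraic manipulation reduces the requirement to $16\varepsilon^2 + 16\varepsilon - 3 \leq 0$, for which the clean cutoff $\varepsilon \leq 3/20$ amply suffices, giving the stated value of $\varepsilon$.
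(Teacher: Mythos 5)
Your proof is correct and takes essentially the same route as the paper's: identify $N_{\mathcal{G}^*}(A)=\pi^{-1}(S)$ and reduce $N_{\mathcal{G}^*}(B)$ to the case $B=\pi^{-1}(T)$, express the surplus $\nu^*(N(\cdot))-\nu^*(\cdot)$ via boundary edges of $S$ (resp.\ $T$) in $\mathcal{G}$, and then invoke the Cheeger bound. The only difference is cosmetic: for the $A\subseteq X_1$ case the paper splits at $\nu_{\mathrm{st}}(S)=4/5$ and reads off $\varepsilon$ directly, whereas you split at $\nu_{\mathrm{st}}(S)=1/2+2\varepsilon$ and close the borderline window via the quadratic $16\varepsilon^2+16\varepsilon-3\le 0$; both give the stated $\varepsilon=\min\{3/20,\Phi_{\mathrm{st}}/(4\overline{\deg})\}$.
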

\begin{proof}
For ease of notation we will write $B'=N_{\mathcal{G}^*}(B)$, $A'=N_{\mathcal{G}^*}(A)$, and $E=E(\mathcal{G})$. For $B\subseteq X_2$, the set $B'$ consists exactly of those edges of $\mathcal{G}$ that have at least one vertex in $\pi(B)$. Consequently,

\begin{align*}
\nu^*(B')=\frac{1}{2\nu_E(E)}\nu_E(B')&=\frac{\frac{1}{2}\int_{\pi(B)} \deg(u)d\nu(u)+\frac{1}{2}\int_{\pi(B)'\setminus \pi(B)}\deg_{\pi(B)}(u)d\nu(u)}{2\nu_E(E)}\\
&\geq\frac{\frac{1}{2}\int_{\pi(B)}\deg(u)d\nu(u)}{\int_{V(\mathcal{G})} \deg(v)d\nu(v)}+\frac{\Phi_{\textrm{st}}\min\left\{\nu_{\textrm{st}}(\pi(B)),1-\nu_{\textrm{st}}(\pi(B))\right\}}{2\overline{\deg}}\\
&\begin{cases}
\geq \nu^*(B)+\frac{\Phi_{\textrm{st}}}{\overline{\deg}}\cdot\frac{\nu_{\textrm{st}}(\pi(B))}{2} \geq \left(1+\frac{\Phi_{\textrm{st}}}{\overline{\deg}}\right)\nu^*(B) & \textrm{ if } \nu_{\textrm{st}}(\pi(B))\leq\frac{1}{2}\\
=\frac{1}{2}\nu_{\textrm{st}}(\pi(B))+\frac{\Phi_{\textrm{st}}\left(1-\nu_{\textrm{st}}(\pi(B))\right)}{2\overline{\deg}}\geq\frac{1}{4}+\frac{\Phi_{\textrm{st}}}{4\overline{\deg}}& \textrm{ if } \nu_{\textrm{st}}(\pi(B))\geq\frac{1}{2},
\end{cases}
\end{align*}
where $\pi(B)'=N_{\mathcal{G}}\big(\pi(B)\big)$.

Now let us consider $A\subseteq X_1$. In this case, $A'$ is all possible lifts of the vertices induced by $A$ in $\mathcal{G}$. That is, if $S\subseteq V(\mathcal{G})$ is the set of vertices that are incident to at least one edge from $A$, then $A'=\pi^{-1}(S)$. Thus
\begin{align*}
    \nu^*(A')&=\frac{1}{\int_{V(\mathcal{G})}\deg(v)d\nu(v)}\int_{S} |\pi^{-1}(u)| d\nu(u)=\frac{1}{2\nu_E(E)}\int_{S} \frac{1}{2}\deg(u) d\nu(u)  \\
    &\geq\frac{1}{2\nu_E(E)}\left(\frac{1}{2}\int_S\deg_S(u)d\nu(u)+\frac{1}{2}\Phi_{\textrm{st}}\min\left\{\nu_{\textrm{st}}(S),1-\nu_{\textrm{st}}(S)\right\}\right)\\
    &\begin{cases}
    \ge \frac{\frac{1}{2}\int_S\deg_S(u)d\nu(u)}{2\nu_E(E)}\left(1+\frac{\Phi_{\textrm{st}}}{\overline{\deg}}\right)\geq \nu^*(A)\left(1+\frac{\Phi_{\textrm{st}}}{\overline{\deg}}\right) & \textrm{ if } \nu_{\textrm{st}}(S)\leq\frac{1}{2}\\
    \ge \frac{\frac{1}{2}\int_S\deg_S(u)d\nu(u)}{2\nu_E(E)}\left(1+\frac{\Phi_{\textrm{st}}(1-\nu_{\textrm{st}}(S))}{\int_S\deg(u)d\nu(u)}\right)\geq\nu^*(A)\left(1+\frac{\Phi_{\textrm{st}}}{\overline{\deg}}\cdot\frac{1-\nu_{\textrm{st}}(S)}{\nu_{\textrm{st}}(S)}\right) & \textrm{ if } \nu_{\textrm{st}}(S)\geq\frac{1}{2}.
    \end{cases}
\end{align*}
We hence have that $\nu^*(A')\geq\left(1+\frac{\Phi_{\textrm{st}}}{4\overline{\deg}}\right)\nu^*(A)$ for all $A\subseteq X_1$ such that $\nu_{\textrm{st}}(S)\leq\frac{4}{5}$. Moreover, $\nu^*(A')=\frac{1}{2}\nu_{\textrm{st}}(S)$, which means that $\nu_{\textrm{st}}(A')\geq\frac{1}{4}+\frac{3}{20}$ whenever $A\subseteq X_1$ is such that $\nu_{\textrm{st}}(S)\geq\frac{4}{5}$.
\end{proof}

\begin{proof}[Proof of Theorem~\ref{thm:non-amenable_balanced_orientation}]
We aim to find a factor-of-iid balanced orientation of the quasi-transitive graph $G$, that is we aim to find a measurable balanced orientation (up to nullsets) in its Bernoulli graphing $(\mathcal{G},\nu)$.

The spectrum of the Markov operator $\mathcal{M}_{\mathcal{G}}$ restricted to $L^2_0\big(V(\mathcal{G}),\nu_{\textrm{st}}\big)$ is bounded away from $1$ (though not necessarily bounded away from $-1$). This is given for $\mathcal{M}_\mathcal{G}$ with non-bipartite $M_T$ by Theorem~\ref{thm:non_bipart_spectral_gap}. For $\mathcal{M}_\mathcal{G}$ with bipartite $M_T$, we deduce this by observing that $L^2_0\left(V(\mathcal{G}),\nu_{\textrm{st}}\right)$ can be written as the direct sum $L^2_{00}(V\left(\mathcal{G}),\nu_{\textrm{st}}\right) \oplus \langle \mathbbm{1}_{X_1}-\mathbbm{1}_{X_2}\rangle$ and applying Theorem~\ref{thm:bipart_spectral_gap}.

By a standard argument this spectral gap ``at the top of the spectrum'' implies that $\mathcal{G}$ has positive Cheeger constant. See e.g.\ \cite[Proposition 3.3.6]{kowalski2019introduction} for a formulation and proof for finite graphs that generalizes to graphings (with the appropriate vertex- and edge measures). 
Consequently by Lemma~\ref{lemma:cheeger_implies_aux_bipart_expansion} the auxiliary graphing $\mathcal{G}^*$ has bipartite vertex expansion, which means it has a measurable perfect matching $M$ by Theorem~\ref{thm:bipartite_graphing_perfect_matching}. Then $M$ defines a measurable balanced orientation of $\mathcal{G}$ as described after Definition~\ref{def:aux_graphing}.
\end{proof}

\section{Other decorations}\label{section:decorations}

\subsection{Schreier decorations of $T_{2d}$}

In this section we prove the four items of Proposition~\ref{prop:observations}.

We start by pointing out that for $T_{2d}$, there are in fact unique $\Aut(T_{2d})$-invariant measures $\mu_{\textrm{bo}}$ and $\mu_{\textrm{Sch}}$ on the spaces ${\tt BalOr}(T_{2d})$ and ${\tt Sch}(T_{2d})$ respectively. The reason is that both the balanced orientation and Schreier decoration are essentially unique on $T_{2d}$, meaning that $\Stab_{\Aut(T_{2d})}(o)$ acts transitively on both ${\tt BalOr}(T_{2d})$ and ${\tt Sch}(T_{2d})$. Here $o$ denotes an arbitrary root vertex in $T_{2d}$.

One can construct $\mu_{\textrm{bo}}$ and $\mu_{\textrm{Sch}}$ by starting at $o$ and defining the balanced orientation or Schreier decoration on the incident edges uniformly at random. Then continue moving radially outwards through the vertices of $T_{2d}$, always extending the structure to the $2d-1$ outwards edges where it is not yet defined, doing so by choosing uniformly randomly among the possible extensions, independently at each vertex. 

Note that $\mu_{\textrm{bo}}$ is a factor of $\mu_{\textrm{Sch}}$, simply by forgetting the colors. In fact, there is an intermediate object, which we can obtain from $\mu_{\textrm{Sch}}$ by forgetting the order of the last two colors $c_{d-1}$ and $c_d$. This gives $\mu_{\textrm{Sch}^*}$, the unique invariant measure on ${\tt Sch}^*(T_d)$, the space of Schreier decorations of $T_{2d}$ with the colors $\{c_{d-1}, c_d\}$ unordered. So the more detailed picture is that $\mu_{\textrm{bo}}$ is a factor of $\mu_{\textrm{Sch}^*}$, which is itself a factor of $\mu_{\textrm{Sch}}$. 

Theorem~\ref{thm:non-amenable_balanced_orientation} implies that $\mu_{bo}$ is a factor of iid. For $d>1$, one could show that $\mu_{\textrm{Sch}}$ is a factor of iid if $T_d$ had a factor of iid proper edge $d$-coloring. (However, the existence of such a coloring is an open question \cite{lyons2017factors}.) 

\begin{proof}[Proof of \ref{itm:if_edge_coloring_then_sch_dec}]
A balanced orientation of $T_{2d}$ gives rise to a decomposition of the edges into infinitely many edge-disjoint $d$-regular subtrees, with each subtree having either only incoming or only outgoing edges at every vertex it covers. Each vertex is covered by exactly two such $d$-regular subtrees. 

We construct a balanced orientation (and the resulting decomposition) as a factor of iid by Theorem~\ref{thm:non-amenable_balanced_orientation}. Furthermore, we can assume that each vertex $v$ still has two independent uniform random labels $l_{\mathrm{in}}(v)$ and $l_{\mathrm{out}}(v)$ to be used in each of the two subtrees covering it. Then by using the assumed factor-of-iid proper edge $d$-coloring on each subtree, we obtain a Schreier decoration.
\end{proof}

In \cite{lyons2017factors}, Lyons presents a partial result towards constructing the unique invariant measure $\mu_{\textrm{col}}$ on proper edge colorings of $T_d$ with $d$ colors as a factor-of-iid. He obtains a factor-of-iid proper edge coloring, but with the last two colors being unordered. This allows us to prove part~\ref{itm:colourblind_Schreier}, which states that even $\mu_{\textrm{Sch}^*}$ is a factor of iid.

\begin{proof}[Proof of \ref{itm:colourblind_Schreier}]
We follow the construction of part~\ref{itm:if_edge_coloring_then_sch_dec}, and use the factor-of-iid proper edge coloring with two colors unordered from \cite{lyons2017factors} on the $d$-regular subtrees. To complete the construction, at every vertex of the tree, we have to match the colors of the $\{c_{d-1}, c_d\}$-colored incoming edges to the two outgoing $\{c_{d-1}, c_d\}$-colored edges. So each vertex chooses a random bijection between these incoming and outgoing edges, placing the paired edges in the same color class from $\{c_{d-1}, c_d\}$. 
\end{proof}

Notice that the map forgetting the order of colors from ${\tt Sch}(T_{2d})$ to ${\tt Sch}^*(T_{2d})$ is a $2$-to-$1$ cover. In a sense, we are only lacking a coin flip to find a Schreier decoration. However, this is exactly the kind of randomness that cannot be used when constructing factors of iid -- vertices far away cannot generate a common random value because of correlation decay. In part~\ref{itm:orientation_not_finishable}, we explicitly show that ``finishing the construction'' starting from a balanced orientation of $T_4$ is not possible.

\begin{proof}[Proof of \ref{itm:orientation_not_finishable}]
Take two oriented edges $\vec{e}$ and $\vec{f}$ of $\vec{T_{4}}$ that are the first and the last edge on a path on which the orientation is alternating. Coloring one of them determines the color of the other in a Schreier decoration that respects the orientation. If the path consists of an odd number of edges then 
$\vec{e}$ and $\vec{f}$ have to have the same color. 

On the other hand, the action of $\Aut(\vec{T_4})$ is edge-transitive, which implies that if we pick $\vec{e}$ and $\vec{f}$ further and further apart, the correlation between their colors must decay. Hence, there can be no factor-of-iid Schreier decoration respecting the orientation.  
\end{proof}

Finally we prove part~\ref{itm:if_sch_dec_then_bigger_sch_dec}, namely that if $\mu_{\textrm{Sch}}$ is a factor of iid for $T_{2d}$, then it is a factor of iid also for $T_{2d+2i}$ for all $i\in\mathbb{N}$.

\begin{proof}[Proof of \ref{itm:if_sch_dec_then_bigger_sch_dec}]

Let us first construct two disjoint factor-of-iid perfect matchings on $T_{2d+2}$ as in \cite{LyonsNazarov}. 
Then after disregarding the edges in these matchings, we are left with infinitely many copies of $T_{2d}$, in which we can find, by assumption, a factor-of-iid Schreier decoration with colors $c_1,\dots,c_d$. Let us now in the tree $T_{2d+2}$ disregard the edges colored with $c_1$, so that we again are left with infinitely many $T_{2d}$-s. We delete the $\{c_2,\dots,c_d\}$, orientation, and matching decorations in these trees, and construct on them, anew, a Schreier decoration with colors $\{c_2,\dots,c_{d+1}\}$. Together with the edges decorated with $c_1$, this gives a Schreier decoration of the tree $T_{2d+2}$.
\end{proof}

\subsection{A connection to measured group theory} \label{subsec:proper_colouring_vs_Schreierization}
Part~\ref{itm:colourblind_Schreier} of Proposition~\ref{prop:observations} also has the following interpretation.

The $2d$-regular tree is the Cayley graph of $F_d$, the free group on $d$ generators, but also of the group $(\mathbb{Z}/2\mathbb{Z})^{*2d}$, the $2d$-fold free product of $(\mathbb{Z}/2\mathbb{Z})$ with itself. A Schreier decoration corresponds to an action of $F_d$, while a proper edge coloring corresponds to an action $(\mathbb{Z}/2\mathbb{Z})^{*2d}$. 

Let $\Gamma = (\mathbb{Z}/2\mathbb{Z})^{*2d}$. Consider the \emph{Bernoulli shift} $F_{d} \acts \big( [0,1]^{F_{d}}, {\tt u}^{F_{d}} \big)$, and similarly $\Gamma \acts \big([0,1]^{\Gamma}, {\tt u}^{\Gamma} \big)$. Let $S$ and $T$ denote the standard generating sets of $F_2$ and $\Gamma$ respectively. 

One can ask whether the two Bernoulli shifts are equivalent in the strong sense that there exists a measure-preserving bijection $\Phi: [0,1]^{F_{d}} \to [0,1]^{\Gamma}$ such that (on a subset of measure 1) whenever $s. \omega = \omega'$ for $\omega, \omega' \in [0,1]^{F_{d}}$, and $s \in S$, then there is some $t \in T$ such that $t. \Phi(\omega) = \Phi(\omega')$. (Note that this is much stronger than Orbit Equivalence, we require $t$ to be from the finite generating set $T$. We require that the distances defined by the word length on the orbits are preserved.)

As far as the authors are aware, this question is open. The existence of such $\Phi$ would imply that $F_d$ has a p.m.p.\ action on $[0,1]^{\Gamma}$ that defines the same distance on orbits as $\Gamma$ and vice versa. So disproving the equivalence 
could be achieved by showing that one of these actions does not exist. This is a fruitful approach when considering the same problem for groups with Cayley graphs isomorphic to the square lattice. 

The results of \cite{lyons2017factors} and part~\ref{itm:colourblind_Schreier} of  Proposition~\ref{prop:observations} respectively say that $\Gamma$ acts on a $2$-cover of $[0,1]^{F_d}$ defining the same distance on orbits, and $F_d$ acts on a $2$-cover of $[0,1]^{\Gamma}$ and defines the same distance on orbits.

\subsection{Decorations of $G^*$} \label{subsec:decorations_of_G*}
In this subsection, we further study the connection between balanced orientations of $G$ and perfect matchings of the auxiliary graph $G^*$.

We will first finish proving the equivalence of a balanced orientation of $G$ with a perfect matching on $G^*$ started in Section~\ref{sec:balanced_orientation}, and then use the perfect matching to construct Schreier decorations and proper edge colorings of $G^*$.

\begin{lemma}\label{lemma:balancedori_is_perfmatch}
Let $G$ be a simple graph with all degrees even. There is a (finitary) $\Aut(G)$-factor of iid balanced orientation of $G$ if and only if there is a (finitary) $\Aut(G^*)$-factor of iid perfect matching.
\end{lemma}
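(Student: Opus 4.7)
The lemma asserts an equivalence, so I would prove two implications. The backward direction, that a perfect matching on $G^*$ induces a balanced orientation on $G$, uses the pushforward already described after Definition~\ref{def:aux_graphing}: given $M$, orient $e=uv \in E(G)$ towards the endpoint whose copy is matched to $x_e$. Thus the plan is to take an $\Aut(G^*)$-factor-of-iid matching $\Phi_{PM}$ and precompose it with an $\Aut(G)$-equivariant \emph{lift} $[0,1]^{V(G)} \to [0,1]^{V(G^*)}$ whose pushforward is the iid uniform measure on $[0,1]^{V(G^*)}$. I would decompose each $\omega(v)$ into countably many independent uniform labels indexed by $\{1,\dots,\deg(v)/2\} \sqcup V(G)$: the first family is assigned to $v_1,\dots,v_{\deg(v)/2}$ by index (respecting $\Aut(G)$ since $\phi(v_i)=\phi(v)_i$), and the second family $\{h_v(w)\}$ produces, for each $e=uv \in E(G)$, the edge-label $\sigma(x_e) := h_u(v) + h_v(u) \bmod 1$. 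This formula is symmetric in $u,v$, hence $\Aut(G)$-equivariant; and since different edges use disjoint components of the decomposition, $\sigma$ is iid uniform as required.

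For the forward direction, I would reverse this strategy. Given $\Phi_{BO}$ and labels $\sigma \in [0,1]^{V(G^*)}$, first project to $\omega \in [0,1]^{V(G)}$ in an $\Aut(G^*)$-equivariant way by setting $\omega(v) := \sum_{i=1}^{\deg(v)/2} \tilde{\sigma}(v_i) \bmod 1$, where the $\tilde{\sigma}(v_i)$ are iid-decomposed copies of $\sigma(v_i)$. The sum mod one is symmetric, so this projection is invariant under the index-permutation subgroup $\prod_v S_{\deg(v)/2} \leq \Aut(G^*)$ and produces iid uniform labels because disjoint vertex-copies supply independent randomness. Applying $\Phi_{BO}$ then yields a balanced orientation $O$ of $G$. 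It remains, at each $v \in V(G)$, to pair the $\deg(v)/2$ incoming edges of $O$ bijectively with $\{v_1,\dots,v_{\deg(v)/2}\}$, and to do so $\Aut(G^*)$-equivariantly. I would use a further batch of iid-decomposed labels at the $v_i$ and at the $x_e$, sort both sides of the bijection by these secondary labels (a.s.\ ties do not occur), and pair in the resulting order. Any $\Aut(G^*)$-symmetry permutes the two sorted lists consistently, so the resulting matching is $\Aut(G^*)$-equivariant.

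The main point to track throughout is that $\Aut(G^*)$ strictly contains $\Aut(G)$, the extra freedom being the index-permutation subgroup $\prod_v S_{\deg(v)/2}$. Every passage between the two label spaces must either exploit this extra symmetry (for the projection, through symmetric functions of the $\sigma(v_i)$) or respect it via equivariant indexing (for the lift). Once this bookkeeping is in place, finitariness is automatic in both directions, because the lift and the projection are local at a single vertex, so composition with a finitary factor of radius $R$ yields a finitary factor of radius at most $R+1$.
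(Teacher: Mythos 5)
The proposal follows the same strategy as the paper in both directions: lift labels from $[0,1]^{V(G)}$ to $[0,1]^{V(G^*)}$ via decomposition and symmetric combination (paper: ``compose'' the labels at the two endpoints, student: $h_u(v)+h_v(u)\bmod 1$ --- equivalent measure-preserving tricks), and conversely project by a symmetric function of the vertex-copies' labels and then randomly pair the $v_i$'s with the incoming $x_e$'s.

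There is, however, a real gap in the forward direction. Your projection $\omega(v):=\sum_i\tilde\sigma(v_i)\bmod 1$ presupposes that a vertex of $G^*$ can measurably and $\Aut(G^*)$-equivariantly determine whether it is of vertex-type or edge-type, and identify the set of twins $\{v_1,\dots,v_{\deg(v)/2}\}$. Your blanket assertion that $\Aut(G^*)$ is $\Aut(G)$ extended by $\prod_v S_{\deg(v)/2}$ is exactly this claim, and it is \emph{false} when $G$ is the bi-infinite path $P$ or a cycle $C_k$: there $G^*$ is again a path or a cycle, $\Aut(G^*)$ acts transitively on vertices and can swap the two sides of the bipartition, so no equivariant type assignment exists and the projection is not well-defined. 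The paper notes explicitly that the reconstruction only works when $G\neq P$ and $G\neq C_k$ (equivalently, $G$ has a vertex of degree $\ge 4$), and dispatches the two exceptional cases directly: for $P=P^*$ neither object exists as a factor of iid; for $C_k$ (with $C_k^*=C_{2k}$) both do. You should either handle these two cases separately or restrict to graphs with a vertex of degree at least 4, which is all that the paper's main application requires. One further small omission: the argument should be reduced to connected $G$ first (as the paper does), since only then is ``$G=P$ or $G=C_k$'' the complete list of obstructions.
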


\begin{proof}
Suppose $G^*$ has a factor-of-iid perfect matching. Given random labels on $V(G)$, we can deterministically produce labels on $V(G^*)$ as we will describe below. We use the factor-of-iid perfect matching to deterministically compute matching $M$ in $G^*$, which again deterministically defines a balanced orientation of $G$. As all the steps are $\Aut(G)$-equivariant, their composition is a factor-of-iid balanced orientation of $G$.

By decomposing our original labels, we can assume that we have $\frac{3}{2}\deg(v)$ independent random labels at each $v \in V(G)$ at the beginning. We make each $v$ give one of these labels to all the $v_i$ as well as all $x_e$ for edges $e$ incident to $v$. Then each $x_e$ takes the two labels it got from its endpoints and composes them to get a label. This way each vertex of $V(G^*)$ obtains a label. The joint distribution of these labels is uniform iid, which completes the construction in this direction.

On the other hand, suppose $G$ has a factor-of-iid balanced orientation. Without loss of generality, we will assume that $G$ is connected. If $G\neq P$ and $G \neq C_k$, then any $y \in V(G^*)$ can determine whether it is of edge-type or vertex-type. Also, if $y$ is of vertex-type (say $y=v_i$), it can identify all other vertices of $G^*$ that correspond to the same vertex of $G$ as $y$ (all vertices of the form $v_j$, $j \in [\deg(v)/2]$). If the $v_j$, $j \in [\deg(v)/2]$ compose their labels to get a label $l(v)$ for each $v \in V(G)$, then any $y \in V(G^*)$ of vertex-type can simulate the factor-of-iid balanced orientation on ``its neighborhood in $G$''. The balanced orientation determines which $\deg(v)/2$ vertices of the form $x_{vu}$ get matched to the $v_i$. The $v_i$ can together choose the matching between $\{v_i ~|~ i \in [\deg(v)/2]\}$ and $\{x_{vu} ~|~ uv \textrm{ is oriented towards } v \textrm{ in } G\}$ randomly, yielding a factor-of-iid perfect matching of $G^*$. 

If $G=P$ is the bi-infinite path then $G^*=G$ and it has neither factor of iid perfect matching nor balanced orientation. If $G=C_k$ for some $k\in\mathbb{N}$ then $G^*=C_{2k}$, and so there is both a balanced orientation on $G$ and a perfect matching on $G^*$.
\end{proof}

\begin{remark}
From a more algebraic point of view, in the proof above, we use the fact that $\Aut(G)$ acts on $[0,1]^{V(G^*)}$. There is a natural embedding $\varphi: \Aut(G) \to \Aut(G^*)$, which in turn defines the translation action of $\Aut(G)$ on $[0,1]^{V(G^*)}$. By decomposing and combining the labels as explained, we have in fact shown that $\Aut(G) \acts \big([0,1]^{V(G^*)}, {\tt u}^{V(G^*)}\big)$ is a factor of $\Aut(G) \acts \big( [0,1]^{V(G)}, {\tt u}^{V(G)}\big)$. We can then utilize the existence of an $\Aut(G^*)$-factor of iid perfect matching of $G^*$ and the correspondence with balanced orientations of $G$ to finish the proof by composing the appropriate factor maps.

In the other direction, we aim to build a factor map from $\Aut(G^*) \acts \big([0,1]^{V(G^*)}, {\tt u}^{V(G^*)}\big)$ to $\Aut(G^*) \acts \big( {\tt PM}(G^*), \mu_{\textrm{pm}} \big)$ through the factor from $\Aut(G) \acts \big( [0,1]^{V(G)}, {\tt u}^{V(G)}\big)$ to $\Aut(G) \acts \big( {\tt BalOr}(G), \mu_{\textrm{bo}} \big)$. But in order to do that we have to consider $ \big([0,1]^{V(G)}, {\tt u}^{V(G)}\big)$ as an $\Aut(G^*)$-space. This is possible exactly when $G$ has a vertex of degree at least $4$, or equivalently when vertices of $G^*$ can determine their type. In this case every element of $\Aut(G^*)$ is an element of $\Aut(G)$ up to permuting the sets $\{v_i ~|~ i \in [\deg(v)/2]\}$. 
\end{remark}

As an immediate corollary, we get factor-of-iid perfect matchings on $G^*$.

\begin{corollary}
Let $G$ be a unimodular, quasi-transitive, non-amenable graph with all degrees even. Then $G^*$ is a unimodular, quasi-transitive, non-amenable graph that has a factor-of-iid perfect matching.
\end{corollary}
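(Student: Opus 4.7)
\medskip
\noindent
\textbf{Proof plan.} The statement bundles four claims about $G^*$: quasi-transitivity, unimodularity, non-amenability, and the existence of a factor-of-iid perfect matching. The last one is essentially free from the machinery already built: Theorem~\ref{thm:non-amenable_balanced_orientation} furnishes a factor-of-iid balanced orientation of $G$, and Lemma~\ref{lemma:balancedori_is_perfmatch} converts it into a factor-of-iid perfect matching of $G^*$. So the real work is to verify the three structural properties.

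\medskip
\noindent
For \emph{quasi-transitivity}, I will produce a large subgroup of $\Aut(G^*)$ with finitely many vertex orbits. Each $\sigma\in\Aut(G)$ induces a bijection on $V(G^*)$ that is determined up to permuting the copies $\{v_i : i\in[\deg(v)/2]\}$ at each vertex $v\in V(G)$; hence the semidirect product $\Aut(G)\ltimes\prod_{v\in V(G)} S_{\deg(v)/2}$ acts on $G^*$ by graph automorphisms. Under this action, each edge-type orbit corresponds to an $\Aut(G)$-orbit of edges of $G$, and each vertex-type orbit corresponds to a vertex orbit of $G$. Quasi-transitivity and bounded degree of $G$ (which is built into the quasi-transitive assumption) give finitely many of each, so $\Aut(G^*)$ has finitely many orbits as well.

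\medskip
\noindent
For \emph{non-amenability}, the plan is to exhibit a quasi-isometry $G \to G^*$. The map $\iota : V(G)\to V(G^*)$, $v\mapsto v_1$, satisfies $d_{G^*}(\iota(u),\iota(v)) = 2\,d_G(u,v)$ for all $u,v\in V(G)$, and every vertex of $G^*$ lies within graph distance at most $2$ of $\iota(V(G))$. Since both graphs have bounded degree (that of $G^*$ is controlled by the maximal degree of $G$, which is finite by quasi-transitivity), non-amenability in the sense of subsection~\ref{subsec:unimod_qt_graphs} is a quasi-isometry invariant, so non-amenability transfers from $G$ to $G^*$.

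\medskip
\noindent
For \emph{unimodularity}, I will verify the Mass Transport Principle on $G^*$ using the one on $G$. Let $T^*$ be a set of $\Aut(G^*)$-orbit representatives: one $o_{i,1}$ per orbit $o_i\in T$, and one $x_{e_k}$ per $\Aut(G)$-orbit of edges. Define a root distribution $p^*$ on $T^*$ by taking $p^*(o_{i,1})\propto p(o_i)\cdot\deg(o_i)/2$ on vertex-type representatives and $p^*(x_e)\propto p(u)+p(v)$ (appropriately normalised) on an edge $e=uv$ of $G$. Given a diagonally $\Aut(G^*)$-invariant $f:V(G^*)\times V(G^*)\to[0,\infty]$, split the two sums in the MTP by the four combinations of source and target types. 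In each of the four cases, use the invariance under $\prod_v S_{\deg(v)/2}$ to see that $f$ depends only on projections onto $V(G)\cup E(G)$, assemble an auxiliary $\Aut(G)$-invariant function on $V(G)\times V(G)$ (absorbing the combinatorial factors $\deg(v)/2$), and apply Proposition~\ref{prop:mass_transport} for $G$. A shortcut, if we wish to avoid the bookkeeping, is to observe that $G\mapsto G^*$ is a local rewiring of the kind studied in \cite{beringer2017} and to cite the preservation of unimodularity from there (this is the same reference already used in Remark~\ref{rmk:quasi_trans_no_perfect_matching}).

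\medskip
\noindent
The main obstacle I anticipate is unimodularity: matching the stabiliser arithmetic across the group inclusion $\varphi(\Aut(G))\hookrightarrow\Aut(G^*)$ and pinning down the correct normalising constants in $p^*$ so that the MTP pushes through cleanly. Quasi-transitivity and non-amenability are essentially formal once the quasi-isometry and the lifted action have been written down.
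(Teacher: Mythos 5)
Your plan matches the paper's proof exactly for the core content: the factor-of-iid perfect matching of $G^*$ is obtained precisely by combining Theorem~\ref{thm:non-amenable_balanced_orientation} with Lemma~\ref{lemma:balancedori_is_perfmatch}, which is the paper's entire (one-line) proof. The structural claims you verify in detail --- quasi-transitivity via the lifted action of $\Aut(G)\ltimes\prod_{v} S_{\deg(v)/2}$, non-amenability via the quasi-isometry $v\mapsto v_1$ (which exactly doubles distances), and unimodularity via the Mass Transport Principle or the local-rewiring reference --- are all correct, but the paper leaves them implicit, relying on the earlier remark that $G^*$ is a local, quasi-isometric transformation of $G$ and on the same reference to \cite{beringer2017} that appears in Remark~\ref{rmk:quasi_trans_no_perfect_matching}; so there is no gap on your side, only more explicit bookkeeping.
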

\begin{proof}
Follows from Theorem~\ref{thm:non-amenable_balanced_orientation} and Lemma~\ref{lemma:balancedori_is_perfmatch}.
\end{proof}

Note that even though we obtained the factor-of-iid balanced orientation in Theorem~\ref{thm:non-amenable_balanced_orientation} through perfect matchings, there we used the auxiliary graphing $\mathcal{G}^*$ of the Bernoulli graphing $\mathcal{G}$ (of $G$). Whereas here we claim that the Bernoulli graphing of the graph $G^*$ has a measurable balanced orientation.

We are now also ready to prove Proposition~\ref{prop:onG*}. For the reader's convenience we restate it here.

\begin{T1}
For every $2d$-regular graph $G$, the bipartite graph $G^*$ is also $2d$-regular, and the following are equivalent.
\begin{enumerate}
\item $G^*$ has got a factor-of-iid proper edge $2d$-coloring.
\item $G^*$ has got a factor-of-iid perfect matching.
\item $G^*$ has got a factor-of-iid Schreier decoration.
\end{enumerate}
Moreover, if any of these is a finitary factor, the others are too.
\end{T1}

Even though proving three implications would be enough, we show five to emphasize the techniques that could be used more widely for other suitable bipartite graphs too.

\begin{proof}
Let $v$ be a vertex of $G$ whose neighbors are $u^1,\dots,u^{2d}$. Then for every $i\in\left[\frac{\deg(v)}{2}\right]=[d]$, the neighbors of $v_i$ in $G^*$ are exactly $x_{vu^1},\dots,x_{vu^{2d}}$. Also for any edge $uv$ in $G$, the neighbors of $x_{uv}$ in $G^*$ are $u_1,\dots,u_d,v_1,\dots,v_d$, and so $G^*$ is also $2d$-regular. Let us denote by $A_V$ the set of vertices of $G^*$ that are of vertex-type, and by $A_E$ the set of vertices of edge-type.

\vspace{5px}
\noindent\textbf{$1\implies2$}.
Choose one of the $2d$ color classes to obtain a perfect matching.

\vspace{5px}
\noindent\textbf{$3\implies2$}. 
Every finite bipartite $2d$-regular graph has a perfect matching and choosing one at random is a factor-of-iid process, so we can assume $G$ is infinite. $P^*=P$ does not admit a factor-of-iid Schreier decoration, so let us suppose that $d\geq2$. Then as in the proof of Lemma~\ref{lemma:balancedori_is_perfmatch}, every vertex can determine whether it belongs to $A_V\subset V(G^*)$ or $A_E\subset V(G^*)$. To obtain a perfect matching, let us fix a color $c$ of the decoration and let each $x\in A_V$ pick the outgoing edge of color $c$ and each $x\in A_E$ the incoming edge of color $c$. 

\vspace{5px}
\noindent\textbf{$3\implies1$}.
Suppose the Schreier decoration uses colors $c_1,\dots,c_d$ and that we want to produce proper coloring with colors $c'_1,\dots,c'_{2d}$. Similarly as in the proof of $3\implies2$, let each edge of color $c_i$ going from $A_E$ to $A_V$ get color $c'_{2i}$ and each edge of color $c_i$ going from $A_V$ to $A_E$ the color $c'_{2i-1}$.

\vspace{5px}
\noindent\textbf{$2\implies3$}.
For every $v\in V(G)$, the $d$ copies of $v$ in $G^*$ together with the $d$ vertices they are matched to induce a $K_{d,d}$. Let us note that the collection of these $K_{d,d}$-s is vertex-disjoint. Let us randomly pick a proper edge $d$-coloring on each of these $K_{d,d}$-s and orient all their edges from $A_E$ to $A_V$. After removing the decorated edges, we are again left with a collection of vertex-disjoint $K_{d,d}$-s, this time in each of which one part is formed by $v_1,\dots,v_d$ for some $v\in V(G)$ and the other by the neighbors of $v_i, i\in[d]$ that are matched towards some $u_j$ where $uv\in E(G)$. Each of these $K_{d,d}$-s again picks a proper $d$-coloring at random, but this time we will orient each edge from $A_V$ to $A_E$.

\vspace{5px}
\noindent\textbf{$1\implies3$}.
Suppose $E(G^*)$ is colored with $c_1,\dots,c_{2d}$. Let every edge of color $c_1,c_3,\dots$ retain it and become oriented from $A_E$ to $A_V$. Then all edges of a color $c_i,i\in[d]$ will get recolored to $c_i$ and get oriented from $A_V$ to $A_E$.
\end{proof}

\section{Open questions}

\label{section:open_questions}

\begin{question}\label{qtn:fiid_sch}
Is the unique $\Aut(T_{2d})$-invariant measure $\mu_{\textrm{Sch}}$ on ${\tt Sch}(T_{2d})$ a factor of iid?
\end{question}

We believe that this question, which has already been asked in \cite{Ball} for the case $d=2$, is the most natural and important one at this point. A very similar question asking for any Cayley diagram, not just of $F_d$, as a factor of iid on the regular tree was asked by Thornton \cite[Problem 4.16]{thornton2020factor}.
Our positive examples of Schreier decorations in \cite{ourselves} so far seem fundamentally different from the tree in the sense that none of them even have infinite monochromatic paths, which would be automatic on $T_{2d}$. This leads us to the following question (also included in \cite{ourselves}).
\begin{question} \label{qtn:sch_infinite_monochrom}
Is there a factor-of-iid Schreier decoration on a transitive graph that has infinite monochromatic paths with positive probability?
\end{question}

The Schreier decoration of $T_{2d}^*$ obtained from a factor-of-iid perfect matching according to the \textbf{$2\implies3$} part of Proposition~\ref{prop:onG*} has infinite monochromatic paths, but $T_{2d}^*$ is not transitive.

The following is the question discussed in subsection~\ref{subsec:proper_colouring_vs_Schreierization}. We encountered it during personal communication with Matthieu Joseph. 

\begin{question} \label{qtn:Bernoulli_isometry}
Is there a measurable bijection $\Phi$ between the Bernoulli shifts of the free group $F_d$ and the free product $(\mathbb{Z}/2\mathbb{Z})^{*2d}$ that preserves the distance define by word length on almost all orbits?
\end{question}

Regarding our spectral result on quasi-transitive unimodular graphs, a natural question is to ask for an extension to unimodular random graphs. 
\begin{question}\label{qtn:spectral_gap}
Let $(G,o)$ be an invariantly non-amenable (a.k.a. non-hyperfinite) unimodular random rooted graph, and let $\mathcal{G}$ denote the Bernoulli graphing on $(G,o)$. Does the Markov operator $\mathcal{M}$ on $\mathcal{G}$ have spectral gap? Maybe under some stronger assumption of non-amenability? 

{\bf Addendum.} After the first version of this paper was made available online, Abért, Fraczyk, and Hayes answered Question~\ref{qtn:spectral_gap} negatively. They construct a unimodular random rooted graph that is non-amenable almost surely, but its Bernoulli graphing does not have spectral gap.
\end{question}

\bibliography{Paper_2_non-amenable}

\begin{thebibliography}{10}

\bibitem{AldousLyons}
David Aldous and Russell Lyons.
\newblock {Processes on Unimodular Random Networks}.
\newblock {\em Electronic Journal of Probability}, 12:1454 -- 1508, 2007.

\bibitem{backhausz2015ramanujan}
{\'A}gnes Backhausz, Bal{\'a}zs Szegedy, and B{\'a}lint Vir{\'a}g.
\newblock Ramanujan graphings and correlation decay in local algorithms.
\newblock {\em Random Structures \& Algorithms}, 47(3):424--435, 2015.

\bibitem{Ball}
Karen Ball.
\newblock Factors of independent and identically distributed processes with
  non-amenable group actions.
\newblock {\em Ergodic Theory and Dynamical Systems}, pages 711--730, 2005.

\bibitem{balliu2020classification}
Alkida Balliu, Brandt Sebastian, Yuval Efron, Juho Hirvonen, Yannic Maus,
  Dennis Olivetti, and Jukka Suomela.
\newblock Classification of distributed binary labeling problems.
\newblock In {\em International Symposium on Distributed Computing}. Schloss
  Dagstuhl--Leibniz-Zentrum f{\"u}r Informatik, 2020.

\bibitem{ourselves}
Ferenc Bencs, Aranka Hru\v{s}kov{\'a}, and L{\'a}szlo~M{\'a}rton T\'{o}th.
\newblock Factor of iid {S}chreier decoration of transitive graphs.
\newblock {\em preprint arXiv:2101.12577}, 2021.

\bibitem{beringer2017}
Dorottya Beringer, Gábor Pete, and Ádám Timár.
\newblock On percolation critical probabilities and unimodular random graphs.
\newblock {\em Electron. J. Probab.}, 22:26 pp., 2017.

\bibitem{bernshteyn2019measurable}
Anton Bernshteyn.
\newblock Measurable versions of the {L}ov{\'a}sz {L}ocal {L}emma and
  measurable graph colorings.
\newblock {\em Advances in Mathematics}, 353:153--223, 2019.

\bibitem{csoka2017invariant}
Endre Cs{\'o}ka and Gábor Lippner.
\newblock Invariant random perfect matchings in {C}ayley graphs.
\newblock {\em Groups, Geometry, and Dynamics}, 11(1):211--244, 2017.

\bibitem{fischer2017sublogarithmic}
Manuela Fischer and Mohsen Ghaffari.
\newblock Sublogarithmic distributed algorithms for {L}ov{\'a}sz {L}ocal
  {L}emma, and the complexity hierarchy.
\newblock In {\em 31st International Symposium on Distributed Computing (DISC
  2017)}. Schloss Dagstuhl-Leibniz-Zentrum fuer Informatik, 2017.

\bibitem{kowalski2019introduction}
Emmanuel Kowalski.
\newblock {\em An introduction to expander graphs}.
\newblock Soci{\'e}t{\'e} Math{\'e}matique de France, 2019.

\bibitem{kun2013expanders}
Gábor Kun.
\newblock Expanders have a spanning {L}ipschitz subgraph with large girth.
\newblock {\em preprint arXiv:1303.4982}, 2013.

\bibitem{lovasz2012large}
L{\'a}szl{\'o} Lov{\'a}sz.
\newblock {\em Large networks and graph limits}, volume~60.
\newblock American Mathematical Soc., 2012.

\bibitem{lyons2017factors}
Russell Lyons.
\newblock Factors of {IID} on trees.
\newblock {\em Combinatorics, Probability \& Computing}, 26(2):285, 2017.

\bibitem{LyonsNazarov}
Russell Lyons and Fedor Nazarov.
\newblock Perfect matchings as {IID} factors on non-amenable groups.
\newblock {\em European Journal of Combinatorics}, 32(7):1115--1125, 2011.

\bibitem{lyons2017probability}
Russell Lyons and Yuval Peres.
\newblock {\em Probability on trees and networks}, volume~42.
\newblock Cambridge University Press, 2017.

\bibitem{MihailWinkler}
Milena Mihail and Peter Winkler.
\newblock On the number of eulerian orientations of a graph.
\newblock {\em Algorithmica}, 1996.

\bibitem{pikhurko2020borel}
Oleg Pikhurko.
\newblock Borel combinatorics of locally finite graphs.
\newblock {\em preprint arXiv:2009.09113}, 2020.

\bibitem{schrijver1983bounds}
Alexander Schrijver.
\newblock Bounds on the number of {E}ulerian orientations.
\newblock {\em Combinatorica}, 3(3-4):375--380, 1983.

\bibitem{thornton2020factor}
Riley Thornton.
\newblock Factor {M}aps for {A}utomorphism {G}roups via {C}ayley {D}iagrams.
\newblock {\em preprint arXiv:2011.14604v2}, 2021.

\bibitem{thornton2020orienting}
Riley Thornton.
\newblock Orienting {B}orel graphs.
\newblock {\em preprint arXiv:2001.01319v2}, 2021.

\bibitem{toth2019invariant}
L{\'a}szl{\'o}~M{\'a}rton T{\'o}th.
\newblock Invariant {S}chreier decorations of unimodular random networks.
\newblock {\em preprint arXiv:1906.03137}, 2019.

\end{thebibliography}
\bibliographystyle{plain}

\end{document}